\newcommand{\sL}{\mathscr L}
\newcommand{\sJ}{\mathscr J}
\newcommand{\p}[1]{\left( {#1} \right)}
\newcommand{\norm}[1]{\left\| #1 \right\|}
\newcommand{\br}[1]{\left[ #1 \right]}
\newcommand{\Br}[1]{\left\{ #1 \right\}}
\newcommand{\dual}[1]{\left< #1 \right>}
\newcommand{\trace}[1]{\texttt{trace}\left( #1\right)}
\newcommand{\sP}{\mathcal{P}}
\newcommand{\bA}{\mathbf A}
\newcommand{\bX}{\mathbf X}
\newcommand{\bQ}{\mathbf Q}
\newcommand{\bG}{\mathbf G}
\newcommand{\ARE}{\bA \bX + \bX \bA^* - \bX \bG \bX + \bQ}
\newcommand{\bPhi}{\boldsymbol{\Phi}}
\newcommand{\bS}{\mathbf S}
\newcommand{\bI}{\mathbf I}
\newcommand{\bT}{\mathbf T}
\newcommand{\bLambda}{\boldsymbol{\Lambda}}
\newcommand{\bd}{\mathbf d}
\newcommand{\bPsi}{\boldsymbol{\Psi}}
\newcommand{\bW}{\mathbf W}
\newtheorem{theorem}{Theorem}
\newtheorem{lemma}{Lemma}
\newtheorem{corollary}{Corollary}
\newtheorem{remark}{Remark}
\newtheorem{definition}{Definition}
\title{Penalized Weighted Trace Minimization for Optimal Control Device Design and Placement}
\author{James Cheung}
\thanks{The material presented in this paper was developed independently by the author and is not associated with any work related to his affiliated organization. }
\address{Toyon Research Corporation, 6800 Cortona Drive, Goleta, CA 93117. }
\begin{document}

\maketitle
\begin{abstract}
In this paper, we present a new analytical framework for determining the well-posedness of constrained optimization problems that arise in the study of optimal control device design and placement within the context of infinite dimensional linear quadratic control systems. We first prove the well-posedness of the newly minted "strong form" of the time-independent operator-valued Riccati equation. This form of the equation then enables the use of trace-class operator analysis and the Lagrange multiplier formalism to analyze operator-valued Riccati equation-constrained optimization problems. Using this fundamental result, we then determine the conditions under which there exists unique solutions to two important classes of penalized trace minimization problems for optimal control device placement and design.
\end{abstract}

\section{Introduction}
The purpose of the work is to address the fundamental question of well-posedness for optimization problems associated with optimal sensor and actuator placement and design in the context of infinite-dimensional linear systems theory. To consolidate definitions, we will collectively refer to sensors and actuators as \emph{control devices}. In the context of the linear quadratic regulator (LQR), the optimization problem of interest is the following:
$$
    \min_{p \in \sP} \Br{ \min_{u(\cdot)\in \sL(\mathbb R_+;U)} \int_0^{+\infty}
    \br{\p{z(t),\bQ z(t)}_H + \p{u(t;p),\mathbf R u(t;p)}_U} dt}
$$
subject to
$$
\left\{
\begin{aligned}
    \frac{\partial z}{\partial t} &= \bA z(t) + \mathbf B(p)u(t;p) \\
    z(0) &= z_0
\end{aligned}
\right.,
$$
for all $t \in [0, \infty)$; where denoting $H$, $U$, and $\sP$ as separable Hilbert spaces associated with the state, control, and parameter respectively, we define $p\in \sP$ to be the generalized design parameter (e.g. actuator placement or geometric design variables), $z(\cdot)\in H$ to be the state variable, $u(\cdot;p) \in L^2(\mathbb R_+; H)$ to be the control variable, $\bQ \in \sL^s(H)$ to be the output operator, $\mathbf R \in \sL(U)$ to be the control weighting operator, $\mathbf B(p) \in \sL(U;H)$ to be the parametrized control operator associated with the control device (e.g. sensors or actuators), and $\bA: \mathcal D(\bA) \rightarrow H$ to define the state process as the generator of a $C_0$-semigroup. The Dynamic Programming Principle determines that the optimal control, for a fixed value $p \in \sP$, is given by
$$
    u_{opt}(t;p) = -\mathbf R^{-1} \mathbf B^*(p) \bX(p) z(t),
$$
where $\bX(p) \in \sL(H)$ is the solution to the weak form operator-valued Riccati equation
$$
    \p{\phi,\br{\bA^*\bX(p) + \bX(p)\bA - \bX(p)\mathbf B(p) \mathbf R^{-1}\mathbf B(p)\bX(p)_ + \bQ}\psi}_H = 0.
$$
for all $\phi,\psi \in \mathcal D(\bA)$. It then follows \cite[Theorem 6.2.4]{curtain2012introduction} that
$$
\begin{aligned}
\min_{u(\cdot)\in L(\mathbb R_+;U)} \int_0^{+\infty}
    \br{\p{z(t),\bQ z(t)}_H + \p{u(t;p),\mathbf R u(t;p)}_U} dt 
    &= \p{z_0 \bX(p) z_0}_H \\
   &=\trace{\bX(p) \bW},
\end{aligned}
$$
where $\bW(\cdot) := z_0\p{z_0, \cdot}_H$ is the operator generated through the exterior product using the initial condition of the system $z_0 \in H$. With these combined observations, the optimal actuator placement and design problem becomes
$$
    \min_{p \in \sP} \trace{\bX(p) \bW}
$$
subject to
$$  
    \p{\phi,\br{\bA^*\bX(p) + \bX(p)\bA - \bX(p)\mathbf B(p) \mathbf R^{-1}(p)\mathbf B(p)^*\bX(p) + \bQ}\psi}_H = \mathbf 0,
$$
for all $\phi,\psi \in \mathcal D(\bA)$. This same optimization problem, albeit with $\bA$ and $\bA^*$ switched in the statement of the operator-valued Riccati equation carries over to the setting of optimal sensor placement and design for linear state estimation systems (i.e. in the context of the K\'alm\'an filter). Therefore, the study of operator-valued Riccati equation constrained weighted trace minimization problems has broad reaching implications in designing optimal systems for both control and state estimation. 

The problem of optimal device placement and design has its origins in the work of \cite{bensoussan2006optimization}, where the notion of optimal sensor placement has been introduced for infinite-dimensional systems. This theory was then extended to actuator placement in LQR systems in \cite{5482053}. Additional notable extensions of the theory have been made to device placement in $\mathcal H_\infty$ control systems \cite{hintermuller2017optimal} and joint parameter estimation and sensor placement in partially observed systems \cite{sharrock2022joint}. The theory of optimal device placement and design has been applied to many practical problems including optimizing thermal control \cite{hu2016sensor}, vibration damping \cite{morris2015comparison}, and mobile sensor \cite{burns2015infinite} systems. A general observation made when studying the existing literature on optimal device placement and design indicates that a constrained optimizer $p_{opt} \in \sP$ that minimizes the weighted operator trace of $\bX(p)$ can be shown to exist; however uniqueness of the optimizer is almost always left as an open problem. This is the key motivator for writing this paper. 

We approach the problem of determining the conditions by which a unique solution to the constrained trace minimization problem associated with optimal device placement and design through penalization techniques. The first problem we study is the control penalization problem, i.e., to seek a constrained minimizer to 
$$
    \mathcal J_\beta(p) = \trace{\bX(p)\bW} + \frac{\beta}{2} \norm{p}^2_\sP,
$$
where we have introduced the penalization parameter $\beta \in \mathbb R_+$ to regularize the cost functional. This penalization scheme is a classical technique used to improve the conditioning of optimization problems that arise in inverse problems \cite{kirsch2011introduction} and optimal control \cite{troltzsch2010optimal}. We demonstrate in \S\ref{sec: problem 1} that the expected result of choosing $\beta$ sufficiently large induces uniqueness of the constrained minimizer $p_{opt} \in \sP$. 

The study of the first penalized optimization problem will form the pedagogical basis for the second problem discussed in this work: the determination of conditions by which an unique optimizer $p_{opt} \in \sP$ exists on constraint manifolds defined by
$$
    \trace{\mathbf B(p)\mathbf R^{-1}(p) \mathbf B^*(p)} = \gamma,
$$
where $\gamma \in \mathbb R_+$ is a positive constant. This constraint on $\mathbf B(p)\mathbf R^{-1} \mathbf B^*(p)$ serves the practical purpose of constraining the so-called gain of the control device, e.g. the amount of control effort used by the control feedback law. Following the theme of penalization, this additional trace constraint is approximately enforced through the following penalized cost functional
$$
    \mathcal J_\beta(p) = \trace{\bX(p)\bW} + \frac{\beta}{2}\br{\trace{\mathbf B(p) \mathbf R^{-1} (p) \mathbf B^*(p)} - \gamma}^2.
$$
Exact constraint enforcement is achieved in the limit as $\beta \rightarrow +\infty$. We determine the conditions under which there exists a unique constrained minimizer to this cost functional in \S\ref{sec: problem 2}. The penalization introduced in the second problem serves primarily as a mechanism to determine uniqueness on the additional constraint manifold. 

The constrained optimization problems are cast into the Lagrange multiplier formalism to study the two penalized constrained trace minimization problems posed in the previous section. A fixed-point argument (c.f. the Banach fixed point theorem \cite[Theorem 3.7-1]{ciarlet2025linear}) is used to determine the uniqueness of the solution to the associated first-order optimality system associated with the Lagrangian functional for each problem. The second-order sufficient optimality condition is then applied to determine that the unique solution of the first-order optimality system is indeed a minimizer. 

While this strategy is sound at first glance, we quickly run into technical issues when attempting to apply the Lagrange multiplier formalism to the operator-valued Riccati equation. The problem is that a sufficiently ``strong'' form of this equation, i.e. of the form
$$
    \bA^*\bX + \bX\bA-\bX\mathbf B\mathbf R^{-1} \mathbf B^*\bX + \bQ = \mathbf 0,
$$
has not been shown to be well-posed on all of the Hilbert space $H$ associated with the state of the system. This inhibits the use of the appropriate trace-class operator analytic tools required to rigorously derive the first-order optimality system associated with the constrained optimization problems discussed in this work. The prior literature (see e.g. \cite{bensoussan2007representation}) only indicates that the operator-valued Riccati equation (without appealing to its Bochner integral form \cite{burns2015solutions}) is only well-defined as an operator equation whose domain is defined on $\mathcal D(\bA)$. However, we are able to overcome this technical hurdle in this work by determining that the ``strong'' form of the operator-valued Riccati equation is in fact well posed in Theorem \ref{theorem: ARE well-posedness}. 

The structure of the paper is the following: First, we begin in \S \ref{section: Notation} with the notation that will be used throughout the work, then in \S\ref{sec: primal and dual} we analyze the well-posedness of the strong form of the operator-valued Riccati equations and its associated dual problem along with the Lipschitz continuity of their solutions with respect to the control parameter $p\in\sP$. Next, in \S\ref{sec: problem 1} and \S\ref{sec: problem 2} we analyze the penalized problems of interest, and finally we conclude this paper with a discussion of our findings in \S\ref{sec: discussion}. 

\section{Notation} \label{section: Notation}
Let $H$ be a separable complex Hilbert space with its  inner product denoted as $\p{\cdot,\cdot}_H : H\times H \rightarrow \mathbb R_+$. Throughout this work, we will define $\Br{e_i}_{i=1}^\infty$ an orthonormal basis of $H$. This means that any element $\phi \in H$ can be represented as
$$
    \phi = \sum_{i=1}^\infty c_i e_i,
$$
where $\Br{c_i}_{i=1}^\infty \in \mathbb C$ are scalar coefficients, and that $\p{e_i, e_j}_H = \delta_{ij}$ with 
$$
\delta_{ij} :=\begin{cases}
1 & \textrm{if } i=j \\
0 & \textrm{Otherwise}
\end{cases}
$$ 
denoting the Kronecker delta function. We denote the space of bounded linear operators mapping $H$ onto $H$ as $\sL\p{H}$. where the $\sL\p{H}$ norm is defined by
$$
   \norm{\bT}_{\sL\p{H}} := \sup_{\substack{\phi \in H\\\phi \neq 0}} \frac{\norm{\bT\phi}_H}{\norm{\phi}_H}.
$$
For any $\bT \in \sL\p{H}$, the definition of the adjoint operator $\bT^* \in \sL\p{H}$ is defined through the inner-product by the following
$$
    \p{\phi, \bT\psi}_H = \p{\bT^*\phi, \psi}_H
$$
for all $\phi, \psi \in H$. 

In this work, we are also be interested in the space of bounded linear operators on Banach spaces. Let $V_1$ and $V_2$ be two complex Banach spaces. We then denote the space of bounded linear operators mapping $V_1$ to $V_2$ as $\sL\p{V_1; V_2}$. The $\sL\p{V_1; V_2}$ norm is then defined by
$$
    \norm{\bT}_{\sL\p{V_1;V_2}} := \sup_{\substack{\phi \in V_1\\ \phi \neq 0}} \frac{\norm{\bT \phi}_{V_2}}{\norm{\phi}_{V_1}}
$$
for all $\phi \in V_1$. With the basic functional analytic notation defined, we now move on to define the more technical notions needed for this work.  

\subsection{Trace-Class Operators}
Trace class operators generalize the notion of finite dimensional matrices with finite trace (i.e. the matrix Lie algebra $\mathfrak{gl} (n;\mathbb C)$) to the setting of infinite dimensional linear operators. The formal definition for the space of trace-class operators $\sJ_1(H)\subset \sL(H)$ is given by
$$
    \sJ_1(H) := \Br{\bT \in \sL(H) : |\trace{\bT}| < \infty},
$$
where the operator trace $\trace{\cdot}: \sJ_1(H) \rightarrow \mathbb C$ is defined by 
$$
    \trace{\bT} := \sum_{k=1}^\infty \p{e_k, \bT e_k}_{H}
$$
for all $\bT \in \sJ_1(H)$, where $\Br{e_k}_{k=1}^\infty$ again forms an orthonormal basis of the Hilbert space $H$. The $\sJ_1(H)$ norm is then defined by
$$
    \norm{\bT}_1 := |\trace{\bT}|,
$$
where $|\cdot|:\mathbb C \rightarrow \mathbb R_+$ denotes the modulus on the field $\mathbb C$. From \cite[Theorem 18.11]{conway2025course}, we have that $\sJ_1(H)$ is a two-sided *-ideal in $\sL(H)$, meaning that for any $\mathbf U \in \sL(H)$ and $\mathbf V \in \sJ_1(H)$, we have that $\mathbf U \mathbf V \in \sJ_1(H)$ and $\mathbf V\mathbf U \in \sJ_1(H)$. 

Using the operator trace and the fact that $\sJ_1(H)$ is a two-sided *-ideal in $\sL(H)$, we are able to induce the following definition of the duality pairing $\dual{\cdot,\cdot}: \sL(H)\times\sJ_1(H) \rightarrow \mathbb C$ as follows
$$ 
    \dual{\mathbf U, \mathbf V} 
    := \trace{\mathbf U^* \mathbf V}
$$
for all $\mathbf U \in \sL(H)$ and $\mathbf V \in \sJ_1(H)$. There is a one-to-one correspondence (i.e. an isometric isomorphism) between $\dual{\mathbf U, \cdot}: \sJ_1(H) \rightarrow \mathbb C$ and $\sJ_1(H)'$, the dual space of $\sJ_1(H)$ \cite[Theorem 19.2]{conway2025course}. From the definition of the operator and the invariance of conjugation under the trace operation, we have that
\begin{equation}
\label{eqn: operator adjoint trace identity}
    \dual{\mathbf U, \mathbf V} 
    = \dual{\bI, \mathbf U^* \mathbf V} 
    = \dual{\bI, \mathbf V^* \mathbf U} 
    = \dual{\mathbf V^*, \mathbf U^*},
\end{equation}
where we have denoted $\bI$ as the identity element in $\sL\p{H}$. This identity will be utilized frequently in the derivation of the first-order optimality conditions associated with the penalized optimization problems studied in \S \ref{sec: problem 1} and \S\ref{sec: problem 2}. 

\subsubsection{Symmetric Operators}
Of particular interest in this work is the subspace of symmetric operators in $\sJ_1\p{H}$ and $\sL\p{H}$. An operator $\bT\in\sL\p{H}$ is symmetric if
$$
    \p{\phi, \bT \psi}_H = \p{\phi, \bT^* \psi}_H
$$
for all $\phi, \psi \in H$. The subspace of all symmetric operators in $\sL\p{H}$ will be denoted as $\sL^s\p{H}$. We will then define $\sJ_1^s(H):=\sJ_1(H)\cap\sL(H)$ as the space of symmetric operators in $\sJ_1(H)$. The definition of the norm for the spaces $\sL^s(H)$ and $\sJ_1^s(H)$ coincides with the $\sL(H)$ and $\sJ(H)$ norms respectively. 

Semi-definite operators arise frequently during the course of the discussion presented in this work. A symmetric operator $\bT \in \sL^s(H)$ is \emph{positive semi-definite} if
$$
    \p{\phi, \bT \phi}_H \geq 0
$$
for all $\phi \in H$. A symmetric operator is then \emph{positive definite} if the inequality is strict. 

\subsection{Exponentially Stable $C_0$-Semigroups}

Following \cite[\S 2]{goldstein2017semigroups}, we define a \emph{$C_0$-semigroup} as a one-parameter family of operators $\Br{\bS(t) \in \sL(H): t \in \mathbb R_+\cup{0}}$ that satisfies the following
\begin{enumerate}[i)]
    \item $\bS(t)\bS(s) = \bS(t+s)$ for each $t,s \in \mathbb R_+$,
    \item $\bS(0) = \mathbf I$, and
    \item $\bS(t)\phi \in H$ is norm-continuous with respect to $t\in\mathbb R_+$ for all $\phi \in H$.
\end{enumerate}
A $C_0$-semigroup is then said to be \emph{exponentially stable} if there exists positive constants $M,\alpha \in \mathbb R_+$ satisfying
\begin{equation} \label{eqn: exponential stability bound}
    \norm{\bS(t)}_{\sL(H)} \leq Me^{-\alpha t}
\end{equation}
for any $t \in \mathbb R_+$. An (unbounded) operator $\bA$ is said to be a \emph{generator} of $\bS(t)$ if 
\begin{equation} \label{eqn: A definition}
\bA\phi = \lim_{h\rightarrow 0^+} h^{-1}\br{\bS(h)\phi - \phi} \in H
\end{equation}
for all $\phi \in \mathcal D(\bA)$, where
$$
    \mathcal D(\bA):=\Br{\phi \in H: \norm{\bA\phi}_H < \infty}. 
$$
The analysis provided in the proof of \cite[Theorem 2.6]{goldstein2017semigroups} indicates that $\mathcal D(\bA)$ is densely defined in $H$. 

The adjoint of $\bS(t)$, denoted by $\bS^*(t)$, is also a bounded operator on $H$. This is easily observed by the following
$$
    \p{\phi, \bS(t)\psi}_H =\p{\bS^*(t)\phi, \psi}_H
$$
for all $\phi,\psi \in H$ and all $t \in \mathbb R_+$. This identity indicates also that
$$
    \norm{\bS^*(t)}_H \leq Me^{-\alpha t}
$$
through the induced operator norm. From \cite[Thoerem 4.3]{goldstein2017semigroups}, we have that
$\bA^*$, the adjoint operator of $\bA$, is the generator of $\bS^*(t)$, i.e.
\begin{equation} \label{eqn: A* def}
\bA^*\psi = \lim_{h \rightarrow 0^+} h^{-1}\br{\bS^*(h) \psi - \psi}\in H
\end{equation}
for all $\psi \in \mathcal D(\bA^*)$, where we have defined
$$
\mathcal D(\bA^*) := \Br{\phi \in H: \lim_{h \rightarrow 0^+} h^{-1}\p{\bS^*(h)\phi - \phi}  \in H}.
$$
$\mathcal D(\bA^*)$ is also a densely defined subset of $H$. 

Bounded perturbations to a generator of an exponentially stable $C_0$-semigroup also generates a semigroup, i.e. if $\bA: \mathcal D(\bA) \rightarrow H$ generates a semigroup $\bS(t)$, then $\bA-\bT$, where $\bT \in \sL\p{H}$ is a bounded positive semi-definite operator, also generates a $C_0$-semigroup \cite[Theorem 6.4]{goldstein2017semigroups}. The perturbation semigroup generated by $\bA-\bT$ is then also exponentially stable since $\bT\in \sL\p{H}$ is positive semi-definite.

\section{The Operator-Valued Riccati Equation and Its Dual Equation}
\label{sec: primal and dual}

This section is dedicated to the study of the strong-form of the operator-valued Riccati equation and its dual equation that arises from the derivation of the first-order optimality system associated with the constrained optimization problems discussed in \S\ref{sec: problem 1} and \S\ref{sec: problem 2}. The focus of this section is on determining the well-posedness and the Lipschitz continuity (with respect to varying control parameters $p \in \sP$) of these equations. In this section and the remainder of this work, we will take $\bG:= \mathbf B\mathbf R^{-1} \mathbf B^*$ and $\bG_p$ to be its parametrized analog to simplify notation. 

\subsection{Strong Operator-Valued Riccati Equation}

We now motivate the definition of the \emph{strong form} of the operator-valued Riccati equation. This form is essential in defining the Lagrangian first-order optimality system that we use to determine the well-posedness of the penalized weighted trace minimization problems studied in this work. We begin with the following.

\begin{definition}
    A symmetric positive semi-definite operator $\bX \in \sL^s(H)$ is said to be a solution of the \emph{strong operator-valued Riccati equation} if it satisfies
    \begin{equation} \label{eqn: strong ARE}
        \ARE = \mathbf 0 
    \end{equation} 
    in the $\sL(H)$ topology (i.e. $\norm{\ARE}_{\sL(H)} = 0$) with the additional condition that $\bA\bX + \bX\bA^* \in \sL^s(H)$, where $\bA: \mathcal D(\bA) \rightarrow H$ is the generator of an exponentially stable $C_0$-semigroup semigroup and the coefficient operators $\bG,\bQ \in \sL^s(H)$ are symmetric positive semi-definite.
\end{definition}

In \cite[Chapter IV-1 Section 3]{bensoussan2007representation} the notion of \emph{strict} and \emph{classical} solutions are presented to describe solutions to \eqref{eqn: strong ARE}. The analysis regarding these solutions was done in the context of $\bA\bX+\bX\bA^*$ being an operator on $\mathcal D(\bA^*)$. In contrast, we have determine in Theorem \ref{theorem: ARE well-posedness} that $\bA\bX + \bX\bA^*$ is actually a bounded operator on all of $H$. This finding opens up the possibility of utilizing trace-class operator theory in the analysis of operator-valued Riccati equations without reformulating it as a Bochner integral equation as done in \cite{burns2015solutions}. This is the key result that enables the derivation of the first-order optimality systems utilized in this work. 

We now determine that the strong form of the operator-valued Riccati equation \eqref{eqn: strong ARE} is well-defined and is also equivalent to the Bochner integral form of the operator-valued Riccati equation (c.f. \cite{burns2015solutions}) in the following. It is further determined that the solution to \eqref{eqn: strong ARE} is a trace-class operator if $\bQ \in\sJ_1^s(H)$. 

\begin{theorem} \label{theorem: ARE well-posedness}
Assume that $\bA : \mathcal D(\bA) \rightarrow H$ is the generator of an exponentially stable $C_0$-semigroup $\bS(t)\in \sL(H)$, $\bQ, \bG \in \sL^s(H)$ are symmetric positive semi-definite operators. Then the unique positive semi-definite solution $\bX \in \sL^s(H)$ to following Bochner integral equation
\begin{equation} \label{eqn: Bochner ARE}
    \bX = \int_0^{+\infty} \bS(t)\p{\bQ - \bX \bG \bX}\bS^*(t)dt
\end{equation}
is the unique solution to \eqref{eqn: strong ARE}. 

Furthermore, if we assume that $\bQ \in \sJ^s_1(H)$, then we have that $\bX \in \sJ_1^s(H)$ is a trace class operator that satisfies the following
\begin{equation} \label{eqn: ARE solution bound}
    \norm{\bX}_{1} \leq \frac{M^2}{2\alpha} \norm{\bQ}_{1},
\end{equation}
where $M,\alpha \in \mathbb R_+$ are the constants associated with the stability bounds for $\bS(t) \in \sL(H)$ given in \eqref{eqn: exponential stability bound}.
\end{theorem}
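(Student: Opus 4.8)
The plan is to prove the result in four stages: (i) record that the Bochner equation \eqref{eqn: Bochner ARE} has a unique positive semi-definite solution $\bX\in\sL^s(H)$ together with an a priori comparison bound; (ii) show this $\bX$ solves the strong equation \eqref{eqn: strong ARE}; (iii) show conversely that every strong solution solves \eqref{eqn: Bochner ARE}, which yields uniqueness; and (iv) promote the comparison bound to the trace norm. For stage (i): since $\bX$ is bounded and positive semi-definite, $\bQ-\bX\bG\bX\in\sL^s(H)$, the integrand $t\mapsto\bS(t)\p{\bQ-\bX\bG\bX}\bS^*(t)\phi$ is norm-continuous for each $\phi\in H$ and bounded in $\sL(H)$ by $M^2e^{-2\alpha t}\norm{\bQ-\bX\bG\bX}_{\sL(H)}$, so the integral in \eqref{eqn: Bochner ARE} converges; existence and uniqueness of a positive semi-definite solution under exponential stability of $\bS(t)$ is classical (cf. \cite{curtain2012introduction,burns2015solutions}). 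Since $\bG\geq\mathbf 0$ we have $\bX\bG\bX\geq\mathbf 0$, hence $\mathbf 0\leq\bX\leq\boldsymbol{\Pi}:=\int_0^{+\infty}\bS(t)\bQ\bS^*(t)\,dt$ and therefore $\norm{\bX}_{\sL(H)}\leq\norm{\boldsymbol{\Pi}}_{\sL(H)}\leq\tfrac{M^2}{2\alpha}\norm{\bQ}_{\sL(H)}$; this comparison is reused in stage (iv).

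For stage (ii): using the semigroup law $\bS(t)\bS(s)=\bS(t+s)$ and the linearity of the integral under left/right composition with bounded operators, one obtains the shift identity
$$\bS(h)\bX\bS^*(h)-\bX=-\int_0^h\bS(s)\p{\bQ-\bX\bG\bX}\bS^*(s)\,ds.$$
Dividing by $h$ and letting $h\to 0^+$, the right-hand side converges in the strong operator topology to $\bX\bG\bX-\bQ$ (the value at $s=0$ of a strongly continuous integrand), while on the left $h^{-1}\br{\bS(h)\bX\bS^*(h)-\bX}=h^{-1}\br{\bS(h)-\bI}\bX\bS^*(h)+\bX\,h^{-1}\br{\bS^*(h)-\bI}$. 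For $\psi\in\mathcal D(\bA^*)$ the second term tends to $\bX\bA^*\psi$; comparing $h^{-1}\br{\bS(h)-\bI}\bX\bS^*(h)\psi$ with $h^{-1}\br{\bS(h)-\bI}\bX\psi$ — their difference is $\br{\bS(h)-\bI}\bX\cdot h^{-1}\br{\bS^*(h)-\bI}\psi\to\mathbf 0$ by uniform boundedness and strong continuity — one concludes that $h^{-1}\br{\bS(h)-\bI}\bX\psi$ converges, i.e. $\bX\psi\in\mathcal D(\bA)$, with $\bA\bX\psi+\bX\bA^*\psi=\p{\bX\bG\bX-\bQ}\psi$ for all $\psi\in\mathcal D(\bA^*)$. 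Since $\mathcal D(\bA^*)$ is dense in $H$ and $\bX\bG\bX-\bQ\in\sL^s(H)$ is bounded and symmetric, the operator $\bA\bX+\bX\bA^*$ extends to $\bX\bG\bX-\bQ\in\sL^s(H)$; thus $\bA\bX+\bX\bA^*\in\sL^s(H)$ and $\norm{\ARE}_{\sL(H)}=0$, so $\bX$ solves \eqref{eqn: strong ARE}.

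For stages (iii) and (iv): let $\bY\in\sL^s(H)$ be any positive semi-definite solution of \eqref{eqn: strong ARE}; by definition of a strong solution $\bY$ maps $\mathcal D(\bA^*)$ into $\mathcal D(\bA)$ with $\p{\bA\bY+\bY\bA^*}\psi=\p{\bY\bG\bY-\bQ}\psi$ there. Then for $\psi\in\mathcal D(\bA^*)$ the map $t\mapsto\bS(t)\bY\bS^*(t)\psi$ is differentiable with derivative $\bS(t)\p{\bA\bY+\bY\bA^*}\bS^*(t)\psi=\bS(t)\p{\bY\bG\bY-\bQ}\bS^*(t)\psi$; integrating over $[0,\tau]$, using $\norm{\bS(\tau)\bY\bS^*(\tau)}_{\sL(H)}\leq M^2e^{-2\alpha\tau}\norm{\bY}_{\sL(H)}\to 0$, and letting $\tau\to+\infty$ gives $\bY\psi=\int_0^{+\infty}\bS(t)\p{\bQ-\bY\bG\bY}\bS^*(t)\psi\,dt$; by density $\bY$ solves \eqref{eqn: Bochner ARE}, so $\bY=\bX$ by the uniqueness recorded in stage (i). For the trace bound, assume $\bQ\in\sJ_1^s(H)$; the ideal property gives $\bS(t)\bQ\bS^*(t)\in\sJ_1^s(H)$, and by cyclicity of the trace together with $\mathbf 0\leq\bQ^{1/2}\bS^*(t)\bS(t)\bQ^{1/2}\leq\norm{\bS(t)}_{\sL(H)}^2\bQ$ one gets $\trace{\bS(t)\bQ\bS^*(t)}=\trace{\bQ^{1/2}\bS^*(t)\bS(t)\bQ^{1/2}}\leq M^2e^{-2\alpha t}\norm{\bQ}_1$. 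Tonelli's theorem then yields $\boldsymbol{\Pi}\in\sJ_1^s(H)$ with $\trace{\boldsymbol{\Pi}}=\int_0^{+\infty}\trace{\bS(t)\bQ\bS^*(t)}\,dt\leq\tfrac{M^2}{2\alpha}\norm{\bQ}_1$, and since $\mathbf 0\leq\bX\leq\boldsymbol{\Pi}$ we get $\trace{\bX}\leq\trace{\boldsymbol{\Pi}}<\infty$, so $\bX\in\sJ_1^s(H)$ and $\norm{\bX}_1=\trace{\bX}\leq\tfrac{M^2}{2\alpha}\norm{\bQ}_1$, which is \eqref{eqn: ARE solution bound}.

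The step I expect to be the crux is stage (ii): extracting from \eqref{eqn: Bochner ARE} the fact that $\bA\bX+\bX\bA^*$ is \emph{bounded on all of $H$} (the novelty relative to \cite{bensoussan2007representation}), rather than merely the weak identity on $\mathcal D(\bA)$. The delicate point there is the limit of the difference quotient $h^{-1}\br{\bS(h)-\bI}\bX\bS^*(h)\psi$, in which the vector $\bX\bS^*(h)\psi$ itself moves with $h$; one must argue carefully that this forces $\bX\psi\in\mathcal D(\bA)$ with limit exactly $\bA\bX\psi$. A secondary technical matter is that the integral in \eqref{eqn: Bochner ARE} (and in the shift identity) must be read in the strong operator sense, since $s\mapsto\bS(s)\bT\bS^*(s)$ is only strongly, not norm, continuous in general; this is harmless given the exponential decay \eqref{eqn: exponential stability bound}.
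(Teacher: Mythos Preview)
Your proof is correct and follows the same overall architecture as the paper's: Bochner solution $\Rightarrow$ strong solution via a difference-quotient/shift computation, strong solution $\Rightarrow$ Bochner solution, and the trace bound via the positivity inequality $\mathbf 0\leq\bX\leq\boldsymbol\Pi$. The packaging of two steps differs slightly from the paper and is arguably cleaner. In stage (ii) the paper inserts \eqref{eqn: Bochner ARE} directly into $(\bA\bX+\bX\bA^*)\zeta$, performs the change of variables $t\mapsto t+h$ inside the integral, and argues somewhat informally that the resulting identity holds for all $\zeta\in H$; your route via the shift identity $\bS(h)\bX\bS^*(h)-\bX=-\int_0^h\bS(s)(\bQ-\bX\bG\bX)\bS^*(s)\,ds$, first on $\mathcal D(\bA^*)$ and then extended by density, makes the domain issue explicit and isolates the genuinely delicate limit (the moving-vector difference quotient) that you correctly flag. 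In stage (iii) the paper passes through the weak form and cites an external equivalence \cite[Proposition~4]{cheung2025approximation}, whereas your direct differentiation of $t\mapsto\bS(t)\bY\bS^*(t)\psi$ is self-contained. Stage (iv) is identical in substance: the paper unwraps the comparison $\mathbf 0\leq\bX\leq\boldsymbol\Pi$ by summing $(e_i,\bX e_i)_H$ over a basis, which is exactly trace monotonicity.
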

\begin{proof}
Let $\bX \in \sL^s(H)$ be the defined by \eqref{eqn: Bochner ARE} and let $\mathcal D(\bA\bX+\bX\bA^*)$ be the domain of $\bA\bX + \bX\bA^*$, i.e.
$$
    \mathcal D\p{\bA\bX + \bX\bA^*} :=
    \Br{\phi\in H : \br{\bA\bX + \bX\bA^*}\phi \in H}.
$$
We will demonstrate that $\mathcal D(\bA\bX + \bX\bA^*) = H$. Using the definition of the infinitesimal generators $\bA$ and $\bA^*$ given in \eqref{eqn: A definition} and \eqref{eqn: A* def} respectively and the definition of $\bX\in\sL^s(H)$ given by \eqref{eqn: Bochner ARE}, we have that 
$$
\begin{aligned}
    &\p{\bA \bX + \bX\bA^*}\zeta\\
    &\quad= \int_0^{+\infty}\br{\bA\bS(t)\p{\bQ - \bX\bG\bX}\bS^*(t)
    + 
    \bS(t)\p{\bQ - \bX\bG\bX}\bS^*(t)\bA^*
    } \zeta dt \\
    &\quad= 
    \lim_{h\rightarrow 0^+} h^{-1}
    \int_0^{+\infty}\br{\p{\bS(t+h) - \bS(t)}\p{\bQ - \bX \bG \bX}\bS(t+h) + \bS(t)\p{\bQ-\bX\bG\bX}\p{\bS^*(t+h) - \bS^*(t)}}\zeta dt \\
    &\quad= \lim_{h\rightarrow 0^+} h^{-1}
    \int_0^{+\infty} \br{\bS(t+h)\p{\bQ - \bX\bG\bX}\bS(t+h) - \bS(t)\p{\bQ - \bX\bG\bX}\bS^*(t)}\zeta dt
\end{aligned}
$$
for all $\zeta \in \mathcal D(\bA\bX + \bX\bA^*)$ . Notice here that we may already take $\zeta$ to be in $H$ without consequence since $\bS(t)$ and $\bS^*(t)$ are bounded operators on $H$. Continuing, we have that
$$
\begin{aligned}
    &\lim_{h\rightarrow 0^+} h^{-1}
    \int_0^{+\infty} \br{\bS(t+h)\p{\bQ - \bX\bG\bX}\bS^*(t+h) - \bS(t)\p{\bQ - \bX\bG\bX}\bS^*(t)}\zeta dt \\
    &\quad =
    \lim_{h\rightarrow 0^+} h^{-1} \Br{\lim_{\tau\rightarrow +\infty} 
    \int_0^\tau \br{\bS(t+h)\p{\bQ - \bX\bG\bX}\bS^*(t+h) - \bS(t)\p{\bQ - \bX\bG\bX}\bS^*(t)}\zeta dt} \\
    &\quad =
        \lim_{h\rightarrow 0^+} h^{-1} 
        \Br{\lim_{\tau\rightarrow +\infty} 
        \br{
    \int_h^{\tau+h} \bS(t)\p{\bQ - \bX\bG\bX}\bS^*(t)\zeta dt - \int_0^\tau\bS(t)\p{\bQ - \bX\bG\bX}\bS^*(t)\zeta dt}} \\
    &\quad =
        \lim_{h\rightarrow 0^+} h^{-1} 
        \br{\lim_{\tau\rightarrow +\infty} 
    \int_\tau^{\tau + h} \bS(t)\p{\bQ - \bX\bG\bX}\bS^*(t)\zeta dt -
    \int_0^h\bS(t)\p{\bQ - \bX\bG\bX}\bS^*(t)\zeta dt} \\    
    &= \lim_{\tau\rightarrow +\infty} \br{\lim_{h\rightarrow 0^+} h^{-1}\int_\tau^{\tau+h} \bS(t)\p{\bQ - \bX\bG\bX}\bS^*(t)\zeta dt} - \bQ\zeta + \bX\bG\bX\zeta \\
    &= \lim_{\tau\rightarrow +\infty} \br{\bS(\tau)\p{\bQ - \bX\bG\bX}\bS^*(\tau)\zeta} - \bQ\zeta + \bX\bG\bX\zeta \\
    &=  - \bQ\zeta + \bX\bG\bX\zeta
\end{aligned}
$$
for all $\zeta \in H$ after applying the fact that $\bS(t) \in \sL(H)$, and consequently also $\bS^*(t) \in \sL(H)$, vanish in the $t\rightarrow +\infty$ limit as a consequence of their exponential stability. The interchange of limits used in the above sequence of equalities is allowable owing to the continuity of the function that the limiting operations are applied to. We have therefore demonstrated that
\begin{equation}
 \label{eqn: inspection}
    \p{\bA\bX + \bX\bA^*}\zeta = \p{-\bQ + \bX\bG\bX}\zeta
\end{equation}
for all $\zeta \in H$. This then implies that $\bX \in \sL^s(H)$ defined as a solution to \eqref{eqn: Bochner ARE} also necessarily satisfies \eqref{eqn: strong ARE}. 

Next, we demonstrate that the solution of the strong operator-valued Riccati equation \eqref{eqn: strong ARE} satisfies \eqref{eqn: Bochner ARE}. To do this, we derive the following weak form of the operator-value Riccati equation from \eqref{eqn: strong ARE}
\begin{equation} \label{eqn: weak form}
    \p{\br{\ARE} \phi, \psi}_H = 0,
\end{equation}
for all $\phi, \psi \in \mathcal D(\bA^*)$. It then follows from \cite[Proposition 4]{cheung2025approximation} that \eqref{eqn: weak form} is equivalent to the Bochner integral form of the operator-valued Riccati equation \eqref{eqn: Bochner ARE}. Therefore, a solution $\bX \in \sL^s(H)$ satisfying \eqref{eqn: Bochner ARE} also satisfies \eqref{eqn: strong ARE}. The uniqueness of $\bX\in\sL^s(H)$ that satisfies \eqref{eqn: strong ARE} is a consequence of the well-posedness of \eqref{eqn: Bochner ARE} determined in \cite{burns2015solutions}. This argument presented in previous paragraph along with this paragraph indicates that there exists only one positive semi-definite solution to \eqref{eqn: strong ARE} and the solution coincides with the solution of \eqref{eqn: Bochner ARE}. Symmetry of $\bX$ is easily determined through inspection taking the adjoint of both sides of \eqref{eqn: strong ARE}. 

We conclude the proof by deriving the solution bound \eqref{eqn: ARE solution bound} under the assumption that $\bQ \in \sJ_1^s(H)$. To that end, we consider the following weak integral form of the operator-valued Riccati equation.
\begin{equation} \label{eqn: weak integral form}
    \p{\psi, \bX\phi}_H = \int_0^{+\infty}  \p{\psi, \bS(t)\p{\bQ - \bX\bG\bX} \bS^*(t)\phi}_H dt 
\end{equation}
for all $\phi,\psi \in H$. It is clear that the solution $\bX \in \sL^s(H) $ to \eqref{eqn: Bochner ARE} also satisfies \eqref{eqn: weak integral form}. By choosing $\phi = \psi = e_i$, where $\Br{e_i}_{i=1}^\infty$ forms an orthonormal basis of $H$, we then have that 
$$
    \p{e_i, \bX e_i}_H + \int_0^{+\infty} \p{e_i, \bS(t)\bX \bG \bX \bS^*(t), e_i}_H dt = \int_0^{+\infty} \p{e_i,\bS(t)\bQ\bS^*(t)e_i}_H dt.
$$
Since $\bX\in \sL^s(H)$ and $\bG\in \sL^s(H)$ are symmetric positive semi-definite, we have that both terms in the left hand side of the above equation are nonnegative. It then follows that
$$
    \p{e_i, \bS(t) \bX\bG\bX \bS^*(t)e_i}_H = \norm{\bG^{\frac12}\bX\bS^*(t) e_i}_H^2 \geq 0,
$$
where $\bG^{\frac12} \in \sL(H)$ denotes the operator square root \cite[Theorem 7.38]{axler2024linear} of $\bG \in \sL^s(H)$. From this, it follows that
$$
    \p{e_i, \bX e_i}_H \leq \int_0^{+\infty} \p{e_i,\bS(t)\bQ\bS^*(t)e_i}_Hds,
$$
where summing both sides over all $i\in\mathbb N$ yields
$$
\begin{aligned}
    \trace{\bX} &\leq \int_0^{+\infty} \trace{\bS(t)\bQ\bS^*(t)}dt \\ 
    &\leq \int_0^{+\infty} \norm{\bS^*(t)}^2_{\sL\p{H}}\norm{\bQ}_{1}ds \\
    &\leq M^2\norm{\bQ}_{1} \int_0^{+\infty}  e^{-2\alpha t} dt \\
    &= \frac{M^2}{2\alpha} \norm{\bQ}_{\sJ_1\p{H}},
\end{aligned}
$$
and hence $\bX \in \sJ_1^s(H)$ and \eqref{eqn: ARE solution bound} follows from seeing that
$\trace{\bX} = \norm{\bX}_{1}$ because $\bX \in \sJ_1^s(H)$ is symmetric positive semi-definite. Since $\bX\in \sJ_1^s(H)$, we have that $\bA\bX+\bX\bA^*\in \sJ_1^s(H)$.  
\end{proof}

We proceed with a discussion on the strong form of the Sylvester equation in the following subsection. 

\subsection{Sylvester's Equation}
In the analysis presented in the following subsections, we frequently encounter the strong form of the operator-valued Sylvester equation (it is ``strong'' in the same sense \eqref{eqn: strong ARE} is the strong form of the operator-valued Riccati equation), given by 
\begin{equation} \label{eqn: Sylvester Equation}
    \bA_1 \bT + \bT \bA_2^* = \mathbf P
\end{equation}
where $\bA_1:= \bA - \mathbf K_1$ and $\bA_2: \bA - \mathbf K_2$ defined as generators of exponentially stable perturbation $C_0$-semigroups $\bS_1(t) \in \sL\p{H}$ and $\bS_2(t) \in \sL\p{H}$ respectively for all $t \in \mathbb R_+$  with $\mathbf K_1, \mathbf K_2 \in \sL\p{H}$ being positive semi-definite operators. Because of the way $\bA_1, \bA_2$ are defined, we have that their domains coincide with $\mathcal D(\bA)$, and hence, \eqref{eqn: Sylvester Equation} is well-defined. With this, we determine the following.
\begin{lemma} \label{lemma: Sylvester Equation}
    Assume that $\bA_1, \bA_2: \mathcal D(\bA) \rightarrow H$ are generators of exponentially stable $C_0$-semigroups $\bS_1(t) \in \sL\p{H}$ and $\bS_2(t) \in \sL\p{H}$ for all $t \in \mathbb R_+$, then the unique solution $\mathbf T \in \sL\p{H}$ of \eqref{eqn: Sylvester Equation} is given by the following Bochner integral representation
    \begin{equation} \label{eqn: sylvester solution}
        \bT = - \int_0^{+\infty} \bS_1(t)\mathbf P \bS_2^*(t)dt,
    \end{equation}
    where we have assumed that $\mathbf P \in \sL\p{H}$.
\end{lemma}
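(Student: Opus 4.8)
\emph{Proof proposal.} The plan is to follow the same route as the proof of Theorem~\ref{theorem: ARE well-posedness}, which is noticeably lighter here because \eqref{eqn: Sylvester Equation} is linear in $\bT$, so no fixed-point argument and no appeal to the nonlinear theory of \cite{burns2015solutions} are needed. First I would check that \eqref{eqn: sylvester solution} is meaningful: using the exponential stability bounds $\norm{\bS_1(t)}_{\sL\p{H}} \le M_1 e^{-\alpha_1 t}$ and $\norm{\bS_2^*(t)}_{\sL\p{H}} \le M_2 e^{-\alpha_2 t}$ afforded by \eqref{eqn: exponential stability bound}, the integrand is bounded in the $\sL\p{H}$ norm by $M_1 M_2 \norm{\mathbf P}_{\sL\p{H}} e^{-(\alpha_1+\alpha_2)t}$, which is integrable on $\mathbb R_+$; hence the Bochner integral converges absolutely and $\bT \in \sL\p{H}$ with $\norm{\bT}_{\sL\p{H}} \le \frac{M_1 M_2}{\alpha_1+\alpha_2}\norm{\mathbf P}_{\sL\p{H}}$.

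Next I would show that this $\bT$ solves \eqref{eqn: Sylvester Equation} on all of $H$, mirroring the difference-quotient computation in the proof of Theorem~\ref{theorem: ARE well-posedness} with $\bQ - \bX\bG\bX$ replaced by $-\mathbf P$ and the single semigroup $\bS(t)$ replaced by the pair $\bS_1(t)$, $\bS_2(t)$. Fixing $\zeta \in H$ and applying the generator definitions \eqref{eqn: A definition} and \eqref{eqn: A* def} to $\bA_1$ and $\bA_2^*$, the two shifted semigroup products collapse into a single difference quotient of $t \mapsto \bS_1(t)\mathbf P\bS_2^*(t)\zeta$; the substitution $t \mapsto t-h$ in the shifted integral telescopes it, the terminal contribution vanishing by exponential stability, and one is left with $\p{\bA_1\bT + \bT\bA_2^*}\zeta = \lim_{h\rightarrow 0^+} h^{-1}\int_0^h \bS_1(t)\mathbf P\bS_2^*(t)\zeta\, dt = \mathbf P\zeta$. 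Hence $\mathcal D(\bA_1\bT + \bT\bA_2^*) = H$ and \eqref{eqn: Sylvester Equation} holds in the $\sL\p{H}$ topology, the interchange of the $h$-limit with the improper integral being justified exactly as in the proof of Theorem~\ref{theorem: ARE well-posedness}.

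For uniqueness, let $\bT_0$ denote the operator just constructed and let $\widetilde{\bT} \in \sL\p{H}$ be any solution of \eqref{eqn: Sylvester Equation}. For $\phi \in \mathcal D(\bA_2^*) = \mathcal D(\bA^*)$, the strong form of \eqref{eqn: Sylvester Equation} yields $\widetilde{\bT}\bS_2^*(t)\phi \in \mathcal D(\bA_1)$ together with $\p{\bA_1\widetilde{\bT} + \widetilde{\bT}\bA_2^*}\bS_2^*(t)\phi = \mathbf P\bS_2^*(t)\phi$ for each $t \in \mathbb R_+$, so $t \mapsto \bS_1(t)\widetilde{\bT}\bS_2^*(t)\phi$ is continuously differentiable with derivative $\bS_1(t)\mathbf P\bS_2^*(t)\phi$. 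Integrating this identity over $\mathbb R_+$ and using $\bS_1(t)\widetilde{\bT}\bS_2^*(t)\phi \rightarrow 0$ as $t \rightarrow +\infty$ (again by exponential stability) gives $\widetilde{\bT}\phi = -\int_0^{+\infty} \bS_1(t)\mathbf P\bS_2^*(t)\phi\, dt = \bT_0\phi$; since $\mathcal D(\bA_2^*)$ is dense in $H$ and $\widetilde{\bT}, \bT_0$ are bounded, $\widetilde{\bT} = \bT_0$, so \eqref{eqn: sylvester solution} is the unique solution.

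The step I expect to be the main obstacle is the domain bookkeeping in the uniqueness argument, namely extracting from the strong form of \eqref{eqn: Sylvester Equation} that $\widetilde{\bT}\bS_2^*(t)\phi$ lies in $\mathcal D(\bA_1)$ for every $t \in \mathbb R_+$ — this is precisely what makes $t \mapsto \bS_1(t)\widetilde{\bT}\bS_2^*(t)\phi$ differentiable and legitimizes the sandwiched-semigroup derivative identity — and it is the Sylvester analogue of the domain analysis carried out in the proof of Theorem~\ref{theorem: ARE well-posedness}. Should that route prove awkward, the alternative is to pass to the weak form of \eqref{eqn: Sylvester Equation} on $\mathcal D(\bA_1^*)\times\mathcal D(\bA_2^*)$ and invoke the Sylvester counterpart of \cite[Proposition 4]{cheung2025approximation}. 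The remaining steps — convergence of the Bochner integral and the difference-quotient verification — are routine.
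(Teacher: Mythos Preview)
Your proposal is correct and follows essentially the same route as the paper: verify absolute convergence of the Bochner integral via the exponential stability bounds, then point to the difference-quotient computation from the proof of Theorem~\ref{theorem: ARE well-posedness} to confirm that \eqref{eqn: sylvester solution} satisfies \eqref{eqn: Sylvester Equation} on all of $H$. The only substantive difference is that the paper dispatches uniqueness in a single line (``consequence of the linearity of the equation''), whereas you spell out the sandwiched-semigroup argument showing any solution of the homogeneous equation must vanish; your version is the honest content behind that line, and your worry about the domain bookkeeping is well-placed but resolves exactly as you outline --- for $\phi \in \mathcal D(\bA_2^*)$ the strong form gives $\bA_1\widetilde{\bT}\phi = \mathbf P\phi - \widetilde{\bT}\bA_2^*\phi \in H$, hence $\widetilde{\bT}\phi \in \mathcal D(\bA_1)$.
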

\begin{proof}
    We begin by first demonstrating that the integral in \eqref{eqn: sylvester solution} is well-defined. This is done by showing that the norm of the integrand is integrable over all of $\mathbb R_+$ \cite[Section 2, Theorem 2]{diestel1974vector}. We verify this claim in the following.
    $$
    \begin{aligned}
        \int_0^{+\infty} \norm{\bS_1(t) \mathbf P \bS_2^*(t)}_{\sL\p{H}}dt 
        &\leq \int_0^{+\infty} M_1 M_2 e^{-\alpha_1 t} e^{-\alpha_2 t} \norm{\mathbf P}_{\sL\p{H}} dt \\
        &\leq M_*^2 \norm{\mathbf P}_{\sL\p{H}}  \int_0^{+\infty} e^{-2\alpha_* t} dt \\
        &\leq \frac{M_*^2 }{2\alpha_*} \norm{\mathbf P}_{\sL\p{H}},
    \end{aligned}
    $$
    where $M_1, M_2, \alpha_1, \alpha_2 \in \mathbb R_+$ are the stability constants associated with the exponentially stable $C_0$-semigroups $\bS_1(t), \bS_2(t) \in \sL\p{H}$ respectively, $M_* := \max\Br{M_1, M_2}$ and $\alpha_* := \min \Br{\alpha_1, \alpha_2}$. Utilizing \eqref{eqn: sylvester solution} in \eqref{eqn: Sylvester Equation} and following a similar derivation as in the first paragraph of the proof of Theorem \ref{theorem: ARE well-posedness} demonstrates that $\mathbf T$ defined by \eqref{eqn: sylvester solution} is a solution of the strong form of Sylvester's equation. Uniqueness follows as a consequence of the linearity of the equation.  
\end{proof}

We now move on to discuss the dual problem to \eqref{eqn: strong ARE} that arises in the first-order optimality system of the penalized optimization problems presented in \S\ref{sec: problem 1} and \S \ref{sec: problem 2}.

\subsection{Dual Problem}
The dual problem arises in the derivation of the first-order optimality system by determining the Fr\'echet derivative of the Lagrangian saddle-point functional with respect to the primal variable $\bX\in\sJ_1^s(H)$. We will go through its derivation in \S\ref{subsec: derivation of optimality system}. For now, we will simply state the strong form of the dual problem and determine the its well-posedness. 

The dual problem to \eqref{eqn: strong ARE} is stated as follows: Seek a $\bLambda\in\sL^s(H)$ that satisfies
\begin{equation} \label{eqn: dual problem strong form}
    \p{\bA^* - \bG\bX}\bLambda + \bLambda\p{\bA-\bX\bG} = -\bW,
\end{equation}
where $\bA$ and $\bA^*$ are the generators of the exponentially stable $C_0$-semigroups $\bS(t)$ and $\bS^*(t)$ respectively, and $\bG \in \sJ_1^s(H)$ and $\bW \in \sL^s(H)$ are symmetric positive semi-definite operators, and $\bX\in \sJ_1^s(H)$ is the solution to \eqref{eqn: strong ARE}. The well-posedness of \eqref{eqn: dual problem strong form} is determined in the following.

\begin{lemma} \label{lemma: Lambda bound}
    Let $\bLambda \in \sL^s(H)$ be the solution to \eqref{eqn: dual problem strong form}, then there exist positive constants $M, \alpha \in \mathbb R_+$ satisfying
    $$
    \norm{\bLambda}_{\sL\p{H}} \leq \frac{M^2}{2\alpha}\norm{\bW}_{\sL(H)}.
    $$
    Furthermore, the solution $\bLambda \in \sL^s(H)$ is symmetric positive semi-definite.
\end{lemma}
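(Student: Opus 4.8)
The plan is to recognize the strong-form dual problem \eqref{eqn: dual problem strong form} as an operator Lyapunov equation of exactly the type handled by Lemma \ref{lemma: Sylvester Equation}, and then to read off both the stated bound and the sign of $\bLambda$ from the resulting Bochner integral representation. Since $\bG, \bX \in \sJ_1^s(H)$ are symmetric we have $(\bX\bG)^* = \bG\bX$, so setting $\mathbf M := \bA^* - \bG\bX$ the equation \eqref{eqn: dual problem strong form} becomes
$$
    \mathbf M \bLambda + \bLambda \mathbf M^* = -\bW,
$$
which is precisely the strong Sylvester equation \eqref{eqn: Sylvester Equation} with $\bA_1 = \bA_2 = \mathbf M$ and $\mathbf P = -\bW \in \sL^s(H)$.

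The crux of the argument is to verify the hypothesis of Lemma \ref{lemma: Sylvester Equation}: that $\mathbf M = \bA^* - \bG\bX$ generates an exponentially stable $C_0$-semigroup $\bS_{\mathbf M}(t) \in \sL(H)$. Since $\bX \in \sJ_1^s(H)\subset\sL^s(H)$ and $\bG \in \sJ_1^s(H)\subset\sL^s(H)$, the product $\bG\bX \in \sL(H)$ is bounded, so $\mathbf M$ is a bounded perturbation of the generator $\bA^*$ of the exponentially stable semigroup $\bS^*(t)$ and hence itself generates a $C_0$-semigroup. For the exponential stability I would identify $\mathbf M^* = \bA - \bX\bG$ as the closed-loop generator associated with \eqref{eqn: strong ARE}: rearranging \eqref{eqn: strong ARE}, which is meaningful at the level of bounded operators because $\bA\bX + \bX\bA^*$ is bounded by Theorem \ref{theorem: ARE well-posedness}, yields the Lyapunov relation
$$
    \p{\bA - \bX\bG}\bX + \bX\p{\bA - \bX\bG}^* = -\p{\bX\bG\bX + \bQ},
$$
where $\bX\bG\bX + \bQ = \p{\bG^{\frac12}\bX}^*\p{\bG^{\frac12}\bX} + \bQ$ is positive semi-definite. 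Together with the standing assumption that $\bA$ itself generates an exponentially stable semigroup, standard algebraic Riccati theory (cf. \cite{curtain2012introduction}) then forces $\bA - \bX\bG$, and therefore its adjoint $\mathbf M = \p{\bA - \bX\bG}^*$, to generate an exponentially stable $C_0$-semigroup; denote its stability constants by $M,\alpha\in\mathbb R_+$ as in \eqref{eqn: exponential stability bound}. I expect this exponential-stability step to be the main obstacle, as it is the one place where a genuine analysis of the closed-loop semigroup, rather than a bare bounded-perturbation statement, is required.

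Once $\mathbf M$ is known to generate $\bS_{\mathbf M}(t)$ with $\norm{\bS_{\mathbf M}(t)}_{\sL(H)} \le M e^{-\alpha t}$, Lemma \ref{lemma: Sylvester Equation} supplies the representation
$$
    \bLambda = \int_0^{+\infty} \bS_{\mathbf M}(t)\,\bW\,\bS_{\mathbf M}^*(t)\,dt,
$$
and the claimed estimate is immediate from estimating the integrand:
$$
    \norm{\bLambda}_{\sL(H)} \le \int_0^{+\infty} \norm{\bS_{\mathbf M}(t)}_{\sL(H)} \norm{\bW}_{\sL(H)} \norm{\bS_{\mathbf M}^*(t)}_{\sL(H)}\,dt \le M^2 \norm{\bW}_{\sL(H)} \int_0^{+\infty} e^{-2\alpha t}\,dt = \frac{M^2}{2\alpha}\norm{\bW}_{\sL(H)}.
$$

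Finally, symmetry and positivity follow from the same formula. Since $\bW = \bW^*$ one has $\bLambda^* = \int_0^{+\infty} \bS_{\mathbf M}(t)\bW^*\bS_{\mathbf M}^*(t)\,dt = \bLambda$, so $\bLambda \in \sL^s(H)$; and for every $\phi \in H$,
$$
    \p{\phi, \bLambda\phi}_H = \int_0^{+\infty} \p{\bS_{\mathbf M}^*(t)\phi,\, \bW\,\bS_{\mathbf M}^*(t)\phi}_H\,dt \ge 0
$$
because $\bW$ is positive semi-definite, so $\bLambda$ is positive semi-definite, which completes the proof.
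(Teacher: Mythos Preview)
Your proof is correct and follows essentially the same route as the paper: recognize \eqref{eqn: dual problem strong form} as a Sylvester equation with $\bA_1=\bA_2=\bA^*-\bG\bX$, invoke Lemma~\ref{lemma: Sylvester Equation} to obtain the Bochner integral representation $\bLambda=\int_0^{+\infty}\bS_{\mathbf M}(t)\bW\bS_{\mathbf M}^*(t)\,dt$, and read off the norm bound and the symmetric positive semi-definiteness directly from this formula. Your treatment of the exponential stability of $\bA^*-\bG\bX$ via the closed-loop Lyapunov identity is in fact more careful than the paper's, which simply asserts that $-\bG\bX$ is a ``stabilizing perturbation'' without noting that $\bG\bX$ need not itself be positive semi-definite.
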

\begin{proof}
    Let $\bT(t) \in \sL\p{H}$ be the exponentially stable $C_0$-semigroup generated by $\bA^* - \bG \bX$. Because \eqref{eqn: dual problem strong form} is a Sylvester equation,  we have from Lemma \ref{lemma: Sylvester Equation} that $\bLambda \in \sL^s(H)$ can be represented in the following Bochner integral form
    \begin{equation} \label{eqn: adjoint representation}
        \bLambda = \int_0^{+\infty} \bT(t)\bW\bT^*(t)dt.
    \end{equation}
    Because $-\bG\bX \in \sL\p{H}$ is a stabilizing perturbation to $\bA: \mathcal D(\bA) \rightarrow H$, we have that $\norm{\bT(t)}_{\sL\p{H}} \leq Me^{-\alpha t}$. It then follows that
    $$
        \norm{\bLambda}_{\sL\p{H}} \leq  M^2
        \norm{\bW}_{\sL(H)}
        \int_0^{+\infty} e^{-2\alpha t}dt = \frac{M^2}{2\alpha} \norm{\bW}_{\sL(H)},
    $$
    from which the bound presented in the lemma is proven. The symmetric positive semi-definite nature of $\bLambda \in \sL^s(H)$ comes from inspecting \eqref{eqn: adjoint representation} where taking the adjoint of both sides of the equation immediately verifies this claim. 
\end{proof}

\subsection{Parametrized Control Device Operator} \label{subsec: G assumptions}
In many application problems, the operator $\bG \in \sL^s(H)$ is parametertrized by a set of parameters, i.e., it is a function that maps a parameter space $\sP$ into the operator space $\sL^s(H)$. This parameter space corresponds to, e.g. control device placement locations \cite{5482053} and geometric design parameters \cite{edalatzadeh2019optimal}. We formalize the definition of the parametrized operators $\bG_p$ for $p\in\sP$ in the following. 

Let $\sP$ be a complex Hilbert space with its norm $\norm{\cdot}_{\sP}$ be induced by the inner product $\p{\cdot, \cdot}_{\sP}$ and $\bG_p \in \sJ_1^s(H)$ be a trace-class symmetric positive semi-definite operator parametrized by a parameter $p \in \sP$ so that the mapping $p \mapsto \bG_p$ is twice Fr\'echet differentiable with respect to $p\in \sP$ and that its first derivative $\frac{\partial \bG_{p}}{\partial p}\p{\cdot}$ is bounded as an operator in $\sL\p{\sP; \sJ_1^s(H)}$  and the second derivative $\frac{\partial^2\bG_p}{\partial p^2}(\cdot,\cdot)$ is bounded in $\sL\p{\sP;\sL\p{\sP;\sJ_1^s(H)}}$. The first assumption we make on $\bG_{\p{\cdot}}$ is that it is uniformly bounded with respect to any $p\in\sP$, i.e. there exists a positive constant $g\in \mathbb R_+$ that satisfies
\begin{equation} \label{eqn: G bound}
    \norm{\bG_p}_1 \leq g
\end{equation}
for all $p \in \sP$. Because we have assumed that $\bG_p \in \sJ_1^s(H)$ is Fr\'echet differentiable for all $p \in \sP$, it follows that it is Lipschitz continuous, i.e. there exists a positive constant $L_\bG \in \mathbb R_+$ so that
\begin{equation} \label{eqn: G lipschitz bound}
    \norm{\bG_{p_1} - \bG_{p_2}}_{1} \leq L_{\bG} 
    \norm{p_1 - p_2}_{\sP}
\end{equation}
for all $p_1,p_2 \in \sP$. To reduce notational clutter, we will denote $\bd\bG_p\p{\cdot} := \frac{\partial \bG_{p}}{\partial p}(\cdot)$. We will further assume that $\bd\bG_p$ is Lipschitz continuous, i.e. there exists a positive constant $L_{\bd\bG} \in \mathbb R_+$ that satisfies 
\begin{equation} \label{eqn: dG lipshitz bound}
\norm{\bd\bG_{p_1} - \bd\bG_{p_2}}_{\sL\p{\sP; \sJ_1^s(H)}} \leq L_{\bd\bG} \norm{p_1 - p_2}_{\sP}
\end{equation}
for all $p_1,p_2 \in \sP$. We will further assume that 
\begin{equation} \label{eqn: nonzero assumption}
    \bd\bG_p \neq 0
\end{equation}
for any $p\in \sP$ and that 
\begin{equation} \label{eqn: invertability assumption}
    \br{\bd\bG_p^*\bd\bG_p}^{-1} \in \sL\p{\sP}.
\end{equation}
Finally, we the twice differentiability assumption implies that
\begin{equation} \label{eqn: bounded second variation}
    \bd^2\bG_p(q,r) < \infty
\end{equation}
for all $q,r \in \sP$, where we have denoted $\bd^2\bG_p(\cdot,\cdot):=\frac{\partial^2 \bG_p}{\partial p^2}(\cdot,\cdot)$. The satisfaction of this assumption is one of the necessary conditions for the second variation of Lagrangian functionals studied in this work to be bounded.  

Under the parametrization of $\bG_p$ with respect to $p\in\sP$, we have that $\p{\bX,\bLambda} \in\sJ_1^s(H)\times\sL^s(H)$ is the solution to the following coupled equations
\begin{subequations}
    \begin{equation} \label{eqn: primal parametrized}
        \bA\bX+\bX\bA^* - \bX\bG_p\bX+\bQ = \mathbf 0
    \end{equation}
    \begin{equation} \label{eqn: dual parametrized}
        \p{\bA^*-\bG_p\bX}\bLambda + \bLambda\p{\bA-\bX\bG_p} = -\bW,
    \end{equation}
\end{subequations}
where $\bX=\bX(p)$ and $\bLambda = \bLambda(p)$. We determine in the following that both $\bX \in \sJ_1^s(H)$ and $\bLambda \in \sL^s(H)$ are Lipschitz continuous functions of $p \in \sP$.

\subsection{Lipschitz Continuity of the Primal and Dual Solutions}
In each of the first-order optimality systems associated with their penalized constrained optimization problems lies a fixed-point equation that must be satisfied. We will determine that the primal problem \eqref{eqn: primal parametrized} and the dual problem \eqref{eqn: dual parametrized} are Lipschitz continuous functions of $p\in \sP$ if $\bG_{\p{\cdot}} : \sP \rightarrow \sJ_1^s(H)$ satisfies the assumptions prescribed in \S\ref{subsec: G assumptions}. These Lipschitz continuity bounds will then be used to determine that each fixed-point equation has only one fixed point. A more detailed discussion of these fixed point problems will be presented in \S\ref{sec: problem 1} and \S\ref{sec: problem 2} respectively. For now, we focus exclusively on proving that $\bX\in\sJ_1^s(H)$ and $\bLambda \in \sL^s(H)$ are Lipschitz continuous with respect to $p \in \sP$. 

Throughout this section, we will denote $p_1,p_2\in\sP$ as any two arbitrary parameters and $\bX_1, \bX_2 \in \sJ_1^s(H)$ to be the solutions to
\begin{equation} \label{eqn: primal indexed}
    \bA\bX_i + \bX_i\bA^* - \bX_i\bG_{p_i}\bX_i + \bQ = \mathbf 0
\end{equation}
for $i=1,2$. Likewise, we denote $\bLambda_1, \bLambda_2 \in \sL^s(H)$ to be the solutions to 
\begin{equation} \label{eqn: dual indexed}
    \p{\bA^* - \bG_{p_i}\bX_i}\bLambda_i
    + 
    \bLambda_i\p{\bA-\bX_i\bG_{p_i}} = -\bW
\end{equation}
for $i=1,2$. We begin our analysis by determining that $\bX(p)$ is a Lipschitz continuous function of $p \in \sP$ in the following. 

\begin{lemma} \label{lemma: continuity wrt G}
    Assume that $\bA: \mathcal D(\bA) \rightarrow H$ is the generator of an exponentially stable $C_0$-semigroup and $\bQ \in \sJ_1^s(H)$ is a symmetric positive semi-definite operator. Further assume that $\bG_{\p{\cdot}} : \sP \rightarrow \sJ_1^s(H)$ is Lipshitz continuous on $\sP$ satisfying \eqref{eqn: G lipschitz bound}. Then the solution $\bX\in \sJ_1^s(H)$ to \eqref{eqn: primal indexed} is a Lipschitz continuous function of $p\in \sP$. Furthermore, there exists positive constants $L_{\bG}, M,\alpha \in \mathbb R_+$ so that
    $$
        \norm{\bX_1 - \bX_2}_{1} \leq \frac{L_{\bG}M^6}{8\alpha^3} \norm{\bQ}^2_{1} \norm{p_1 - p_2}_{\sP},
    $$
    where we have denoted $\bX_1, \bX_2 \in \sJ_1^s(H)$ to be the solution to \eqref{eqn: primal indexed} with the coefficient operators $\bG_{p_1}, \bG_{p_2} \in \sJ_1^s(H)$ determined by $p_1, p_2 \in \sP$ respectively.
\end{lemma}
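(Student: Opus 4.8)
The plan is to derive a linear Sylvester equation for the difference $\mathbf E := \bX_1 - \bX_2$, represent it through the Bochner integral supplied by Lemma~\ref{lemma: Sylvester Equation}, and then estimate that integral in the trace norm using the a priori bound \eqref{eqn: ARE solution bound} of Theorem~\ref{theorem: ARE well-posedness} together with the Lipschitz hypothesis \eqref{eqn: G lipschitz bound} on the map $p \mapsto \bG_p$.

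First I would subtract the two instances of \eqref{eqn: primal indexed}. Because Theorem~\ref{theorem: ARE well-posedness} guarantees $\bA\bX_i + \bX_i\bA^* \in \sJ_1^s(H)$, every term involved is a genuine bounded operator on $H$, so the manipulation takes place entirely in $\sL(H)$. I would then split the nonlinear difference by the telescoping identity
\[
  \bX_1\bG_{p_1}\bX_1 - \bX_2\bG_{p_2}\bX_2 = \mathbf E\,\bG_{p_1}\bX_1 + \bX_2\p{\bG_{p_1} - \bG_{p_2}}\bX_1 + \bX_2\bG_{p_2}\,\mathbf E,
\]
which, after collecting the two terms linear in $\mathbf E$ on the left, turns the subtracted equation into the strong Sylvester equation
\[
  \p{\bA - \bX_2\bG_{p_2}}\mathbf E + \mathbf E\p{\bA^* - \bG_{p_1}\bX_1} = \bX_2\p{\bG_{p_1} - \bG_{p_2}}\bX_1 .
\]
Here $\bA - \bX_2\bG_{p_2}$ and $\bA - \bX_1\bG_{p_1}$ are the closed-loop generators attached to the Riccati problems with parameters $p_2$ and $p_1$; as in the proof of Lemma~\ref{lemma: Lambda bound}, each is a stabilizing perturbation of $\bA$ and therefore generates an exponentially stable $C_0$-semigroup, which I will call $\bS_1(t)$ and $\bS_2(t)$. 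The right-hand side lies in $\sJ_1(H)$ since $\bG_{p_1} - \bG_{p_2} \in \sJ_1^s(H)$ and $\sJ_1(H)$ is a two-sided ideal in $\sL(H)$.

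Applying Lemma~\ref{lemma: Sylvester Equation} (whose Bochner representation requires only the exponential stability of the two coefficient operators, established above) then yields $\mathbf E = -\int_0^{+\infty} \bS_1(t)\,\bX_2\p{\bG_{p_1} - \bG_{p_2}}\bX_1\,\bS_2^*(t)\,dt$, and since $\mathbf E = \bX_1 - \bX_2 \in \sJ_1^s(H)$ this Bochner integral also converges in the trace norm. I would move $\norm{\cdot}_1$ inside the integral and distribute it so that $\bG_{p_1} - \bG_{p_2}$ carries the $\sJ_1(H)$ norm while the semigroup factors $\bS_1(t),\bS_2^*(t)$ and the solutions $\bX_1,\bX_2$ carry the $\sL(H)$ norm. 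Using $\norm{\bS_i(t)}_{\sL(H)} \le Me^{-\alpha t}$ (and the same bound for the adjoint), $\norm{\bX_i}_{\sL(H)} \le \norm{\bX_i}_1 \le \tfrac{M^2}{2\alpha}\norm{\bQ}_1$ from \eqref{eqn: ARE solution bound}, and $\norm{\bG_{p_1} - \bG_{p_2}}_1 \le L_\bG\norm{p_1 - p_2}_{\sP}$ from \eqref{eqn: G lipschitz bound}, the remaining integration $\int_0^{+\infty} e^{-2\alpha t}\,dt = \tfrac{1}{2\alpha}$ collapses the constants to $\tfrac{L_\bG M^6}{8\alpha^3}\norm{\bQ}_1^2$, which is precisely the claimed estimate; Lipschitz continuity of $p \mapsto \bX(p)$ on $\sP$ is then immediate.

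The main obstacle is the assertion that $\bA - \bX_i\bG_{p_i}$ generates an exponentially stable $C_0$-semigroup: a product of two positive semi-definite operators need not be self-adjoint, so the bounded positive-semi-definite perturbation theorem used elsewhere in the paper does not apply directly, and one must instead invoke the closed-loop stability of the LQR Riccati solution (exactly as is done implicitly in Lemma~\ref{lemma: Lambda bound}). A secondary, bookkeeping, point is that $\bS_1(t)$ and $\bS_2(t)$ a priori carry their own stability constants; taking $M$ to be the largest and $\alpha$ the smallest of these and of the constants for $\bS(t)$ is what allows the final bound to be stated with a single pair $M,\alpha$, and one should note these can be chosen independently of $p_1,p_2$ thanks to the uniform bounds \eqref{eqn: G bound} and \eqref{eqn: ARE solution bound}.
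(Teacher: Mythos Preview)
Your proposal is correct and follows essentially the same route as the paper: subtract the two Riccati equations, telescope the quadratic term to obtain a Sylvester equation for $\bX_1-\bX_2$ with closed-loop coefficients, invoke Lemma~\ref{lemma: Sylvester Equation} for the Bochner representation, and estimate in trace norm using \eqref{eqn: ARE solution bound} and \eqref{eqn: G lipschitz bound}. The only cosmetic difference is that your telescoping places $\bA-\bX_2\bG_{p_2}$ on the left and $\bA^*-\bG_{p_1}\bX_1$ on the right (the paper swaps the indices), and your caveats about the non-self-adjointness of $\bX_i\bG_{p_i}$ and the uniformity of the stability constants are in fact more careful than the paper's own treatment of those points.
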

\begin{proof}
    We begin by taking the difference between the equations for $\bX_1 \in \sJ_1^s\p{H}$ and $\bX_2 \in \sJ_1^s(H)$ respectively (see \eqref{eqn: primal indexed}), We have that the difference $\bX_1 - \bX_2 \in \sJ_1^s\p{H}$ is then the solution to the following Sylvester equation
    $$
        \br{\bA - \bX_1 \bG_{p_1}}\p{\bX_1 - \bX_2} 
        + \p{\bX_1 - \bX_2} \br{\bA^* - \bG_{p_2}\bX_2} = \bX_1\p{\bG_{p_1} - \bG_{p_2}} \bX_2.
    $$
    It then follows from Lemma \ref{lemma: Sylvester Equation} that the solution to the above equation satisfies the following integral equation
    \begin{equation} \label{eqn: A}
        \p{\bX_1 - \bX_2} = -\int_0^{+\infty} \bT_1(t)\bX_1\p{\bG_{p_1} - \bG_{p_2}}\bX_2\bT_2(t)dt, 
    \end{equation}
    where $\bT_1(t), \bT_2(t) \in \sL\p{H}$ are the $C_0$-semigroups generated by $\br{\bA - \bX_1\bG_{p_1}} : \mathcal D\p{\bA} \rightarrow H$ and $\br{\bA^* - \bG_{p_2}\bX_2} : \mathcal D\p{\bA^*} \rightarrow H$ respectively. 
    
    Since $\bA$ is the generator of an exponentially stable $C_0$-semigroup and that $\bX_1\bG_{p_1} \in \sJ_1^s(H)$ and $\bG_{p_2}\bX_2 \in \sJ_1^s\p{H}$ are bounded nonnegative operators, we have that $\br{\bA - \bX_1\bG_{p_1}}: \mathcal D(\bA) \rightarrow H$ and $\br{\bA^*-\bG_{p_2}\bX_2} : \mathcal D(\bA^*) \rightarrow H$ are also generators of exponentially stable $C_0$-semigroups. Furthermore, it follows that
    \begin{equation} \label{eqn: B}
        \norm{\bT_1(t)}_{\sL\p{H}} \leq Me^{-\alpha t} \textrm{ and } \norm{\bT_2(t)}_{\sL\p{H}} \leq Me^{-\alpha t}
    \end{equation}
    for all $t \in \mathbb R_+$, where $M,\alpha$ are the same constants associated with the unperturbed semigroup $\bS(t) \in \sL(H)$. 

    Norming both sides of \eqref{eqn: A} with respect to the $\sJ_1\p{H}$ norm then yields
    $$
        \norm{\bX_1 - \bX_2}_{1} \leq  \int_0^{+\infty} \norm{\bT_1(t)\bX_1\p{\bG_{p_1} - \bG_{p_2}}\bX_2\bT_2(t)}_{1} dt
    $$
    after applying the definition of the operator trace. It then follows that
    $$
        \norm{\bX_1 - \bX_2}_{1} \leq \frac{M^2}{2\alpha} \norm{\bX_1}_{1} \norm{\bX_2}_{1}\norm{\bG_{p_1} - \bG_{p_2}}_{\sL(H)}
    $$
    after applying the fact that $\sJ_1(H)$ is a two-sided *-ideal in $\sL(H)$ and the bounds provided in \eqref{eqn: B}. Applying \eqref{eqn: ARE solution bound} in the statement of Theorem \ref{theorem: ARE well-posedness} then yields
    $$
    \begin{aligned}
        \norm{\bX_1 - \bX_2}_{1} 
        &\leq \frac{M^6}{8\alpha^3} \norm{\bQ}^2_{1} \norm{\bG_{p_1} - \bG_{p_2}}_{\sL\p{H}} \\
        &\leq \frac{L_{\bG}M^6}{8\alpha^3} \norm{\bQ}^2_{1} \norm{p_1 - p_2}_{\sP},
    \end{aligned}
    $$    
    after applying the Lipschitz continuity assumption on $\bG_{\p{\cdot}}: \sP \rightarrow \sL^s(H)$.
\end{proof}

With Lemma \ref{lemma: Lambda bound}, we are now able to prove that $\bLambda \in \sL^s(H)$ is a Lipshitz continuous function of $p \in \sP$ in the following.
\begin{lemma} \label{lemma: Lambda Difference bound}
    Let $\bLambda \in \sL^s(H)$ satisfy \eqref{eqn: dual parametrized}. Then $\bLambda \in \sL^s(H)$ is a Lipschitz continuous function of $p \in \sP$ and there exists positive constants $M, \alpha \in \mathbb R_+$ so that
    $$
        \norm{\bLambda_{1} - \bLambda_{2}}_{\sL\p{H}} \leq \p{\frac{M^{10}g}{16\alpha^5}\norm{\bQ}^2_1 
        + \frac{M^6}{4\alpha^3}\norm{\bQ}_1 }L_{\bG} \norm{p_1 - p_2}_{\sP},
    $$
    for any $p_1, p_2 \in \sP$, where $g := \sup_{p\in\sP}\norm{\bG_p}_{\sL(H)}$ for any $p \in \sP$.
\end{lemma}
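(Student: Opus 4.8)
The plan is to mimic the structure of the proof of Lemma \ref{lemma: continuity wrt G}: take the difference of the two dual equations \eqref{eqn: dual indexed}, rearrange it into a Sylvester equation for $\bLambda_1 - \bLambda_2$, apply Lemma \ref{lemma: Sylvester Equation} to obtain a Bochner integral representation, and then estimate. First I would subtract the $i=2$ equation from the $i=1$ equation. The left-hand side produces, after adding and subtracting the mixed terms $\bG_{p_1}\bX_1 \bLambda_2$ and $\bLambda_2 \bX_1 \bG_{p_1}$ (or a symmetric choice), a Sylvester operator $(\bA^* - \bG_{p_1}\bX_1)(\bLambda_1 - \bLambda_2) + (\bLambda_1 - \bLambda_2)(\bA - \bX_1\bG_{p_1})$ acting on $\bLambda_1 - \bLambda_2$, while all the remaining terms — the right-hand sides $-\bW$ cancel, and what is left is a forcing term of the form $(\bG_{p_2}\bX_2 - \bG_{p_1}\bX_1)\bLambda_2 + \bLambda_2(\bX_2\bG_{p_2} - \bX_1\bG_{p_1})$. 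Both perturbations $\bG_{p_1}\bX_1$ and $\bX_1\bG_{p_1}$ are bounded positive semi-definite (trace-class), so $\bA^* - \bG_{p_1}\bX_1$ generates an exponentially stable $C_0$-semigroup $\bT_1(t)$ with $\norm{\bT_1(t)}_{\sL(H)} \le Me^{-\alpha t}$, exactly as in \eqref{eqn: B}.

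Next I would invoke Lemma \ref{lemma: Sylvester Equation} to write
$$
    \bLambda_1 - \bLambda_2 = \int_0^{+\infty} \bT_1(t)\br{(\bG_{p_1}\bX_1 - \bG_{p_2}\bX_2)\bLambda_2 + \bLambda_2(\bX_1\bG_{p_1} - \bX_2\bG_{p_2})}\bT_1^*(t)\,dt,
$$
norm both sides in $\sL(H)$, pull out $\frac{M^2}{2\alpha}$ from the time integral using the exponential bounds, and reduce to estimating $\norm{\bG_{p_1}\bX_1 - \bG_{p_2}\bX_2}_{\sL(H)}$ times $\norm{\bLambda_2}_{\sL(H)}$. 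The middle-term difference is handled by the standard add-and-subtract trick:
$$
    \bG_{p_1}\bX_1 - \bG_{p_2}\bX_2 = (\bG_{p_1} - \bG_{p_2})\bX_1 + \bG_{p_2}(\bX_1 - \bX_2),
$$
so its $\sL(H)$ norm is bounded by $\norm{\bG_{p_1} - \bG_{p_2}}_{1}\norm{\bX_1}_{1} + \norm{\bG_{p_2}}_{1}\norm{\bX_1 - \bX_2}_{1}$ (using that the trace norm dominates the operator norm and that $\sJ_1(H)$ is an ideal). Now I substitute the three ingredients already available: $\norm{\bX_1}_{1} \le \frac{M^2}{2\alpha}\norm{\bQ}_1$ from \eqref{eqn: ARE solution bound}; $\norm{\bX_1 - \bX_2}_{1} \le \frac{L_\bG M^6}{8\alpha^3}\norm{\bQ}_1^2\norm{p_1-p_2}_\sP$ from Lemma \ref{lemma: continuity wrt G}; $\norm{\bG_{p_1} - \bG_{p_2}}_{1} \le L_\bG\norm{p_1 - p_2}_\sP$ from \eqref{eqn: G lipschitz bound}; $\norm{\bG_{p_2}}_1 \le g$ from \eqref{eqn: G bound}; and $\norm{\bLambda_2}_{\sL(H)} \le \frac{M^2}{2\alpha}\norm{\bW}_{\sL(H)}$ from Lemma \ref{lemma: Lambda bound}. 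Assembling these — $\frac{M^2}{2\alpha}$ from the Sylvester integral, times $\frac{M^2}{2\alpha}\norm{\bW}$ from $\norm{\bLambda_2}$, times $\big(\frac{M^2}{2\alpha}\norm{\bQ}_1 L_\bG + g\cdot\frac{L_\bG M^6}{8\alpha^3}\norm{\bQ}_1^2\big)$, with a factor $2$ for the two symmetric halves — should collapse to the stated constant $\big(\frac{M^{10}g}{16\alpha^5}\norm{\bQ}_1^2 + \frac{M^6}{4\alpha^3}\norm{\bQ}_1\big)L_\bG$, modulo absorbing $\norm{\bW}_{\sL(H)}$ into the constants (the paper appears to normalize or suppress $\norm{\bW}$).

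The main obstacle is bookkeeping rather than conceptual: getting the Sylvester rearrangement exactly right so that the generator appearing is $\bA^* - \bG_{p_1}\bX_1$ (for which exponential stability and the bound $Me^{-\alpha t}$ are already established via the bounded-perturbation argument) and so that every leftover term genuinely lands in the forcing, with no stray unbounded pieces — this requires care because the dual equation \eqref{eqn: dual indexed} already has $p$-dependent coefficients on the operator side, not just a $p$-dependent right-hand side. A secondary point to verify is that the forcing term is genuinely in $\sL(H)$ (indeed in $\sJ_1(H)$, since $\bG_{p_i}\bX_i \in \sJ_1^s(H)$ and $\bLambda_2 \in \sL^s(H)$, and $\sJ_1(H)$ is an ideal), which is what licenses the application of Lemma \ref{lemma: Sylvester Equation}. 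Once the representation formula is in hand, the remaining estimates are the same routine ideal-property and exponential-decay manipulations used in the preceding lemmas, and the constants should assemble as claimed.
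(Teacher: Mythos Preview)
Your proposal is correct and follows essentially the same route as the paper: subtract the two dual equations, rearrange into a Sylvester equation for $\bLambda_1-\bLambda_2$, invoke Lemma~\ref{lemma: Sylvester Equation} for the Bochner integral representation, and then estimate the forcing term using Theorem~\ref{theorem: ARE well-posedness}, Lemma~\ref{lemma: continuity wrt G}, and Lemma~\ref{lemma: Lambda bound}. The only immaterial variation is that you take the symmetric generator pair $(\bA^*-\bG_{p_1}\bX_1,\,\bA-\bX_1\bG_{p_1})$ whereas the paper uses the mixed pair $(\bA^*-\bG_{p_2}\bX_2,\,\bA-\bX_1\bG_{p_1})$, which merely shifts which $\bLambda_i$ appears in the forcing term and yields the same bound (and you are right that the paper silently drops the factor $\norm{\bW}_{\sL(H)}$).
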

\begin{proof}
    We begin by taking the difference of the equations \eqref{eqn: dual indexed} between $i=1,2$. We arrive at the following Sylvester equation
    \begin{equation} \label{eqn: adjoint difference equation}
        \br{\bA^* - \bG_{p_2}\bX_2}\p{\bLambda_1 - \bLambda_2} + \p{\bLambda_1 - \bLambda_2}\br{\bA - \bX_1\bG_{p_1}} 
        = F(\bX_1, \bX_2, \bLambda_1, \bLambda_2, \bG_{p_1}, \bG_{p_2})
    \end{equation}
    where we have denoted
    $$
    \begin{aligned}
        &F(\bX_1, \bX_2, \bLambda_1, \bLambda_2, \bG_{p_1}, \bG_{p_2}) := \\
        & \qquad 
        \bG_{p_1}(\bX_1-\bX_2)\bLambda_1 
        + (\bG_{p_1} - \bG_{p_2})\bX_1\bLambda_1 
        + \bLambda_2 \bX_2(\bG_{p_1}-\bG_{p_2}) 
        + \bLambda_2(\bX_1 - \bX_2)\bG_{p_1}. 
    \end{aligned}
    $$
    Let now $\bT_2(t) \in \sL\p{H}$ be the exponentially stable $C_0$-semigroup generated by $\bA^* - \bG_{p_2}\bX_2$ and $\bT_1(t) \in \sL\p{H}$ be the exponentially stable $C_0$ semigroup generated by $\bA - \bX_1\bG_{p_1}$ respectively. With Lemma \ref{lemma: Sylvester Equation} we have that \eqref{eqn: adjoint difference equation} can be written in the following equivalent Bochner integral form
    $$
        (\bLambda_1 - \bLambda_2) = -\int_0^{+\infty} \bT_2(t) F\p{\bX_1, \bX_2, \bLambda_1, \bLambda_2, \bG_{p_1}, \bG_{p_2}} \bT_1(t) dt.
    $$
    Norming both sides with the $\sL\p{H}$ norm then allows us to see that 
    \begin{equation} \label{eqn: AAA}
    \begin{aligned}
        \norm{\bLambda_1 - \bLambda_2}_{\sL\p{H}} &\leq \int_0^{+\infty} M^2 e^{-2\alpha t } \norm{F(\bX_1, \bX_2, \bLambda_1, \bLambda_2, \bG_{p_1}, \bG_{p_2})}_{\sL\p{H}} dt \\
        &= \frac{M^2}{2\alpha} \norm{F(\bX_1, \bX_2, \bLambda_1, \bLambda_2, \bG_{p_1}, \bG_{p_2})}_{\sL\p{H}}.
    \end{aligned}
    \end{equation}
    
    We now bound $\norm{F\p{\bX_1,\bX_2,\bLambda_1, \bLambda_2, \bG_{p_1}, \bG_{p_2}}}_{\sL\p{H}}$. Recall \eqref{eqn: G bound}, where we have assumed $\norm{\bG_{p}}_{\sL(H)} = g$ for all $p \in \sP$. We have then
    $$
    \begin{aligned}
        &\norm{F(\bX_1, \bX_2, \bLambda_1, \bLambda_2, \bG_{p_1}, \bG_{p_2})}_{\sL\p{H}} \\
        & \quad \leq \norm{\bG_{p_1}(\bX_1-\bX_2)\bLambda_1}_{\sL\p{H}}
        + \norm{(\bG_{p_1} - \bG_{p_2})\bX_1\bLambda_1 }_{\sL\p{H}} 
        \\&\qquad
        + \norm{\bLambda_2 \bX_2(\bG_{p_1}-\bG_{p_2})}_{\sL\p{H}} 
        + \norm{\bLambda_2(\bX_1 - \bX_2)\bG_{p_1}}_{\sL\p{H}} \\ 
        &\quad \leq \p{\frac{M^8g}{8\alpha^4} \norm{\bQ}^2_1 + \frac{M^4}{2\alpha^2}\norm{\bQ}_1} \norm{\bG_{p_1} - \bG_{p_2}}_{\sL(H)},
    \end{aligned}
    $$
    after applying Theorem \ref{theorem: ARE well-posedness}, Lemmas \ref{lemma: continuity wrt G}, and \ref{lemma: Lambda bound}. Inserting this bound into \eqref{eqn: AAA} then results in 
    $$
        \norm{\bLambda_1 - \bLambda_2}_{\sL\p{H}} 
        \leq \p{\frac{M^{10}g}{16\alpha^5}\norm{\bQ}^2_1 
        + \frac{M^6}{4\alpha^3}\norm{\bQ}_1 } \norm{\bG_{p_1} - \bG_{p_2}}_{\sL(H)}.
    $$
    Applying the Lipschitz continuity assumption \eqref{eqn: G lipschitz bound} yields the result of this lemma. 
\end{proof}

\subsection{The Critical Cone}
The sufficient second-order optimality condition requires that the Hessian evaluated at the stationary point of the Lagrangian be positive definite in the directions in the critical cone associated with the constraint. Loosely speaking, the critical cone is a subset of the tangent space of the constraint manifold evaluated at $\p{\bX_{opt}, p_{opt}}$ that maps the first Gat\'eux (directional) derivative of the constraint to zero. We will utilize the second-order optimality condition to demonstrate that the solution to the first-order optimality system is indeed the unique minimizer to the associated penalized constrained optimization problems studied in this work. 

Let us define 
$$
c(\bX, p) := \bA\bX + \bX\bA^* - \bX\bG_p\bX+\bQ
$$
to be the constraint function associated with the operator-valued Riccati equation, where again $\bQ \in \sJ_1^s(H)$ and $\bG_{(\cdot)} : \sP \rightarrow \sJ_1^s(H)$ satisfies \eqref{eqn: G bound}, \eqref{eqn: G lipschitz bound}, and \eqref{eqn: dG lipshitz bound}. The critical cone for the constrained optimizer is defined by the following set
$$
    \mathcal K(\bX_{opt},p_{opt}) :=
    \Br{
        \p{\bPhi,q} \in \sJ_1^s(H) \times \sP : 
        \left.\frac{\partial c}{\partial \bX}\right|_{\p{\bX_{opt},p}_{opt}}(\bPhi) = 0 \textrm{ and } 
        \left.\frac{\partial c}{\partial p}\right|_{\p{\bX_{opt}, p_{opt}}}(q) = 0
    }.
$$
We now characterize $\mathcal K(\bX_{opt}, p_{opt})$. Taking the first Gat\'eux derivative of $c(\cdot, \cdot)$ with respect to $\bX \in \sJ_1^s(H)$ yields
$$
\begin{aligned}
    \frac{\partial c}{\partial \bX}(\bPhi) 
    &= \bA\bPhi + \bPhi \bA^* - \bPhi\bG_p\bX - \bX\bG_p\bPhi \\
    &=\p{\bA-\bX\bG_p}\bPhi + \bPhi\p{\bA^* - \bG_p \bX} \\
    &=0
\end{aligned}
$$
for all $\bPhi \in\sJ_1^s(H)$. Because $\bPhi \in \sJ_1^s(H)$ is now the solution of a Sylvester equation (see \eqref{eqn: Sylvester Equation}) with $\mathbf 0$ as the data, we have that $\bPhi \equiv \mathbf 0$. Continuing, we have that 
$$
    \frac{\partial c}{\partial p}(q) = -\bX\bd\bG_p(q) \bX = 0.
$$
In general, the set of $q \in \sP$ satisfying the above condition is nonempty. Combining the two observations made above, we have that $\mathcal K(\bX_{opt}, p_{opt})$ can be characterized by the following
\begin{equation} \label{eqn: critical cone}
    \mathcal K(\bX_{opt}, p_{opt}) := \Br{\p{\mathbf 0, q} \in \sJ_1^s(H)\times\sP : -\bX_{opt}\bd\bG_{p_{opt}}(q)\bX_{opt} = \mathbf 0}.
\end{equation}

With the preliminary analysis completed, we are now ready to present and analyze the two penalized constrained optimization problems of interest in the following two sections of this paper. 

\section{Control Penalized Constrained Trace Minimization}
\label{sec: problem 1}
\subsection{Problem Statement}
The control penalization technique \cite{troltzsch2010optimal} is a well-known and often applied method of regularizing the solution of optimization problems. It has the benefit of implicitly constraining the parameter space and improving the well-posedness properties of the optimization problem. We study this technique in the context of control device design and placement in this section. The ideas presented in this section will serve as a pedagogical stepping stone for the more complex arguments needed to analyze the problem presented in the following section. 

The control penalized optimization problem of interest is to seek a $\p{\bX_{opt},p_{opt}} \in \sJ_1^s(H) \times \sP$ that minimizes
\begin{equation} 
\label{eqn: cost functional 1}
    \mathcal J_\beta(\bX,p) := \trace{\bX \bW} + \frac{\beta}{2}\norm{p}_{\sP}^2
\end{equation}
constrained by the strong operator-valued Riccati equation
\begin{equation}
\label{eqn: operator-valued Riccati equation}
    \bA\bX + \bX\bA^* - \bX\bG_p\bX + \bQ = \mathbf 0,
\end{equation}
where $\bW \in \sL^s(H)$ is a symmetric positive semi-definite weighting operator, $\bA: \mathcal D(\bA) \subset H \rightarrow H$ is the generator of the exponentially stable $C_0$-semigroup $\bS(t) \in \sL(H)$, $\bQ \in \sJ_1^s(H)$ is a nonnegative operator, and $\bG_{\p{\cdot}}: \sP \rightarrow \sJ_1^s(H)$ is the parametrized operator associated with the control device. The definition of $\mathcal J_{\beta}(\cdot)$ presented in \eqref{eqn: cost functional 1} is equivalent to the cost functional presented in the introduction of this work. This is because for each $p \in \sP$, there is only one $\bX\in \sJ_1^s(H)$ that satisfies \eqref{eqn: operator-valued Riccati equation}.

In the following discussion, we will first present the first-order optimality system associated with the penalized optimization problem studied in this section, then we provide a derivation of this set of equations. Next we determine that there exists only one solution to the first order optimality system and that this solution satisfies the second-order sufficient conditions to qualify as a constrained minimizer of the cost functional \eqref{eqn: cost functional 1}. In other words, there exists only one global constrained minimizer for \eqref{eqn: cost functional 1}. 

\subsection{First-Order Optimality System}
The first-order optimality system associated with the constrained optimization problem is given as follows: Seek a $\p{\bX_{opt},\bLambda_{opt}, p_{opt}} \in \sJ_1^{s}(H) \times \sL^{s}(H) \times \sP$ that satisfies 

\begin{subequations} \label{eqn: optimality system}
    \noindent\textrm{\textbf{Primal Problem: }}
    \begin{equation}
    \label{eqn: primal problem}
        \bA\bX+\bX\bA^*-\bX\bG_p\bX+\bQ = 0
    \end{equation}
    \textrm{\textbf{Dual Problem: }}
    \begin{equation}
    \label{eqn: dual problem}
        \p{\bA^* - \bG_p\bX}\bLambda + \bLambda\p{\bA-\bX\bG_p} = -\bW
    \end{equation}
    \textrm{\textbf{Optimality Condition: }}
    \begin{equation}
    \label{eqn: optimality condition}
        p = \frac{1}{\beta}\bd\bG_p^* \bX\bLambda\bX.           
    \end{equation}
\end{subequations}
The variable $\bLambda \in \sL^s(H)$ is the dual solution that arises from applying the Lagrange multiplier to \eqref{eqn: primal problem}. We will determine that there is only one solution $\p{\bX_{opt}, \bLambda_{opt}, p_{opt}}\in \sJ_1^s(H)\times\sL^s(H)\times\sP$ that satisfies \eqref{eqn: optimality system} and that $\p{\bX_{opt}, p_{opt}}$ is in fact a constrained minimizer of \eqref{eqn: cost functional 1}. 

\subsubsection{Derivation of the Optimality System}
\label{subsec: derivation of optimality system}
We now derive the first-order optimality system \eqref{eqn: optimality system}. We begin by introducing the Lagrange multiplier $\bLambda \in \sL(H)$. The Lagrangian functional associated with the cost functional \eqref{eqn: cost functional 1} with constraint \eqref{eqn: operator-valued Riccati equation} is the following
\begin{equation}
\label{eqn: lagrangian functional 1}
\mathcal L\p{\bX,p,\bLambda} := 
\dual{\bI, \bX\bW} +
\frac{\beta}{2} \norm{p}^2_{\sP} + 
\dual{\bLambda,\br{\bA\bX + \bX\bA^* - \bX\bG_p \bX + \bQ}},
\end{equation}
where we have utilized the identity $\trace{\bX\bW} := \dual{\bI, \bX\bW}$. Because $\dual{\bLambda,\cdot}$ is a functional belonging to $\sJ_1(H)'$, we have that \eqref{eqn: lagrangian functional 1} is well-defined since we have demonstrated that there exists a $\bX \in \sJ_1^s(H)$ that satisfies \eqref{eqn: primal problem} in the $\sJ_1(H)$ topology in Theorem \ref{theorem: ARE well-posedness}. 

The first order necessary condition for optimality, i.e. that $\p{\bX,p,\bLambda} \in \sJ_1(H) \times \sP \times \sL(H)$ is a saddle-point of \eqref{eqn: lagrangian functional 1},  is the following
\begin{equation}
\label{eqn: weak first order conditions 1}
    \frac{\partial \mathcal L}{\partial \bX}(\bPsi) = 0, 
    \quad 
    \frac{\partial \mathcal L}{\partial p}(q) = 0, \quad
    \frac{\partial \mathcal L}{\partial \bLambda}(\bPhi) = 0,
\end{equation}
for all $\p{\bPsi, q, \bPhi} \in \sL(H)\times \sP \times \sJ_1(H)$. In this work, we choose to work with the strong form of \eqref{eqn: weak first order conditions 1}, i.e.
\begin{equation}
\label{eqn: strong first order conditions 1}
    \frac{\partial \mathcal L}{\partial \bX} = 0, 
    \quad 
    \frac{\partial \mathcal L}{\partial p} = 0, 
    \quad
    \frac{\partial \mathcal L}{\partial \bLambda} = 0,
\end{equation}
because we have already derived the necessary theoretical results for the well-posedness analysis using the strong form of the equation that arise from \eqref{eqn: strong first order conditions 1}. It is easily determined that \eqref{eqn: strong first order conditions 1} being satisfied also implies that \eqref{eqn: weak first order conditions 1} is also satisfied, therefore it is sufficient to consider \eqref{eqn: strong first order conditions 1} in our analysis. Furthermore, \eqref{eqn: weak first order conditions 1} implies \eqref{eqn: strong first order conditions 1} because $\sJ_1(H)$ and $\sL(H)$ form a duality pairing under the trace operator. We proceed in deriving each term in \eqref{eqn: strong first order conditions 1} in the remaining paragraphs of this subsection.

We begin our discussion by deriving \eqref{eqn: primal problem}. Taking the Gat\'eux derivative of $\mathcal L(\cdot, \cdot,\cdot)$ with respect to $\bLambda$ yields
$$
    \frac{\partial \mathcal L}{\partial \bLambda}(\bPhi) = 
    \dual{\bPhi,\br{\bA\bX + \bX\bA^* - \bX\bG_p \bX + \bQ}}
$$
for all $\bPhi \in \sL(H)$. Setting $\frac{\partial \mathcal L}{\partial \bLambda}(\bPhi) = 0$ then yields the following necessary condition
\begin{equation}
    \label{eqn: weak primal equation}
    \dual{\bPhi,\br{\bA\bX + \bX\bA^* - \bX\bG_p \bX + \bQ}} = 0
\end{equation}
for all $\bPhi \in \sL(H)$. We have that \eqref{eqn: weak primal equation} is true if and only if \eqref{eqn: primal problem} is satisfied  because there is a one-to-one correspondence between  $\dual{\bPhi,\cdot}$ and  $\sJ_1(H)'$, and observing that Theorem \ref{theorem: ARE well-posedness} indicates that \eqref{eqn: strong ARE} is satisfied in the $\sJ_1(H)$ norm topology.

We now derive \eqref{eqn: dual problem}. The Gat\'eux derivative of $\mathcal L(\cdot, \cdot, \cdot)$ with respect to $\bX$ is the following
$$
\begin{aligned}
    \frac{\partial\mathcal L}{\partial \bX}(\bPsi) &=
    \dual{\bI, \bPsi\bW} +
    \dual{\bLambda, \br{ \p{\bA  - \bX \bG_p}\bPsi + \bPsi \p{ \bA^* - \bG_p\bX}}} \\
    &= \dual{\bW, \bPsi} 
        + \dual{\p{\bA^* - \bG_p \bX^*}\bLambda, \bPsi}
        + \dual{\bLambda^*, (\bA-\bX^*\bG_p)\bPsi^*} \\
    &= \dual{\bW, \bPsi} 
        + \dual{\p{\bA^* - \bG_p \bX^*}\bLambda, \bPsi}
        + \dual{(\bA^*-\bG_p\bX)\bLambda^*, \bPsi^*} \\
   &= \dual{\bW, \bPsi} 
        + \dual{\p{\bA^* - \bG_p \bX^*}\bLambda, \bPsi}
        + \dual{\bLambda \p{\bA-\bX^*\bG_p}, \bPsi} \\
   &= \dual{\bW, \bPsi} 
        + \dual{\p{\bA^* - \bG_p \bX}\bLambda, \bPsi}
        + \dual{\bLambda \p{\bA-\bX\bG_p}, \bPsi}
\end{aligned}
$$
for all $\bPsi \in \sJ_1(H)$. Note that we have made heavy use of \eqref{eqn: operator adjoint trace identity} in the derivation above. In the final equality, we have applied the property that $\bX^* = \bX$. Setting $\frac{\partial L}{\partial \bX}(\bPsi) = 0$ then results in 
\begin{equation}
\label{eqn: operator-weak dual}
\dual{\p{\bA^* - \bG_p \bX}\bLambda, \bPsi}
        + \dual{\bLambda \p{\bA-\bX\bG_p}, \bPsi}
    = -\dual{\bW, \bPsi}        
\end{equation}
for all $\bPsi \in \sJ_1(H)$. Because $\dual{\br{\p{\bA^*-\bG_p\bX}\bLambda + \bLambda\p{\bA - \bX\bG_p} + \bW},\cdot}$ is defined in the weak-* topology of $\sJ_1(H)'$, we have that \eqref{eqn: operator-weak dual} is satisfied if and only if \eqref{eqn: dual problem} is satisfied. This is because the zero element in $\sJ_1(H)'$ is the only functional that maps any element in $\sJ_1(H)$ to zero in the complex plane. 

We conclude this section with the derivation of \eqref{eqn: optimality condition}.
$$
\begin{aligned}
\frac{\partial L}{\partial p}(q) 
&= 
\beta\p{p,q}_{\sP}
-\dual{\bLambda, \bX \bd\bG_p(q)\bX} \\
&=\beta\dual{p,q}_\sP - \dual{\bX^*\bLambda, \bd\bG_p(q)\bX}\\
&=\beta\dual{p,q}_{\sP} - \dual{\bLambda^*\bX, \bX^*\bd\bG_p(q)^*} \\
&=\beta\dual{p,q}_{\sP} - \dual{\bX\bLambda^*\bX,\bd\bG_p(q)^*} \\ 
&=\beta\dual{p,q}_{\sP} - \dual{\bX^*\bLambda\bX^*,\bd\bG_p(q)} \\
&=\p{\beta p-\bd\bG_p^*\bX\bLambda\bX, q}_{\sP} \\
&= 0,
\end{aligned}
$$
after applying the fact that $\bX^* = \bX$, $\bLambda^* = \bLambda$, and making heavy use of \eqref{eqn: operator adjoint trace identity}. We then have that
\begin{equation} \label{eqn: weak optimality condition}
(p,q)_\sP = \frac{1}{\beta}\p{\bd\bG_p^*\bX\bLambda\bX, q}_\sP
\end{equation}
for all $q \in \sP$. We then arrive at \eqref{eqn: optimality condition} since \eqref{eqn: weak optimality condition} can only be satisfied if its strong form is satisfied since $\sP$ is a Hilbert space.

\subsubsection{Well-Posedness of the Optimality System}
\label{subsubsec: well-posedness 1}
We now determine the conditions that must be satisfied in order for \eqref{eqn: optimality system} to only have one solution. First, notice that \eqref{eqn: primal problem} is well-posed for any $p \in \sP$ as a consequence of Theorem \ref{theorem: ARE well-posedness} and observing that $\bG_p \in \sJ_1^s(H)$. Next, notice that $\bX \in \sJ_1^s(H)$ is coupled to \eqref{eqn: dual problem} as an input parameter. Lemma \ref{lemma: Lambda bound} indicates that \eqref{eqn: dual problem} is well-posed for any $\bX\in\sJ_!^s(H)$. Because \eqref{eqn: primal problem} is independent of $\bLambda \in \sL^s(H)$, there exists a unique $\p{\bX, \bLambda} \in \sJ_1^s(H)\times\sL^s(H)$ that satisfies the coupled equation \eqref{eqn: primal indexed} and \eqref{eqn: dual indexed} for any choice of $p \in \sP$. Because of this, we have determined the existence of a mapping $p \mapsto \p{\bX(p),\bLambda(p)}$ for any $p \in \sP$. This mapping is continuous as a consequence of Lemmas \ref{lemma: continuity wrt G} and \ref{lemma: Lambda Difference bound}. It is our goal then to determine the conditions by which we can select a unique $p \in \sP$ so that $\p{\bX(p),\bLambda(p),p} \in \sJ_1^s(H)\times \sL^s(H)\times\sP$ satisfies \eqref{eqn: optimality condition}. 

Let us define $f(p): \sP \rightarrow \sP$ as follows
$$
    f(p) := \bd\bG_p^*\bX(p)\bLambda(p)\bX(p),
$$
where $\p{\bX(p),\bLambda(p)}$ satisfies \eqref{eqn: primal problem} and \eqref{eqn: dual problem} respectively. It then becomes clear that \eqref{eqn: optimality condition} can be written in the following fixed point form
$$
    p = f(p).
$$
It is our goal to invoke the Banach fixed point theorem \cite[Theorem 3.7-1]{ciarlet2025linear} to determine the conditions under which the function $f(\cdot)$ is a contractive map, i.e., $f(\cdot)$ satisfies
$$
    \norm{f(p_1) - f(p_2)}_{\sP} \leq k \norm{p_1 - p_2}_{\sP}
$$
with $k < 1$ for any $p_1,p_2 \in \sP$. More concretely, we wish to demonstrate for any $p_1,p_2\in\sP$, that
\begin{equation} \label{eqn: contraction condition}
    \frac{1}{\beta}\norm{\bd\bG_{p_1}^*\bX_1\bLambda_1\bX_1 - \bd\bG_{p_2}^*\bX_2\bLambda_2\bX_2}_{\sP} \leq k\norm{p_1 - p_2}_{\sP}.
\end{equation}

We now characterize the constant $k\in \mathbb R_+$. Lemmas \ref{lemma: Lambda bound}, \ref{lemma: continuity wrt G}, and \ref{lemma: Lambda Difference bound} determines that both $\bX\in \sJ_1^s(H)$ and $\bLambda\in \sL^s(H)$ are Lipschitz continuous functions of $p \in \sP$ under the assumption that \eqref{eqn: G bound} and \eqref{eqn: G lipschitz bound} are satisfied. If we further assume \eqref{eqn: dG lipshitz bound} is satisfied, then it follows that
$$
\begin{aligned}
    \frac{1}{\beta}
    \norm{\bd\bG_{p_1}^*\bX_1 \bLambda_1 \bX_1 - \bd\bG_{p_2}^*\bX_2 \bLambda_2 \bX_2 }_{\sP} &\leq 
    \frac{1}{\beta}
    \bigg(
    \norm{\bd\bG_{p_1} - \bd\bG_{p_1}}_{\sL(\sP; \sJ_1(H))}
    \norm{\bX_1}^2_{1}\norm{\bLambda_2}_{\sL(H)} \\
    &\qquad + 
    \norm{\bd\bG_{p_1}}_{\sL\p{\sP; \sJ_1(H)}}
    \norm{\bX_1-\bX_2}_1 \norm{\bLambda_2}_{\sL(H)}
    \norm{\bX_2}_1 \\
    &\qquad +
    \norm{\bd\bG_{p_1}}_{\sL(\sP;\sJ_1(H))}
    \norm{\bX_1}_1\norm{\bLambda_1 - \bLambda_2}_{\sL(H)}\norm{\bX_2}_1\\
    &\qquad+ 
    \norm{\bd\bG_{p_1}}_{\sL\p{\sP; \sJ_1(H) }}
    \norm{\bX_1}_1
    \norm{\bLambda_1}_{\sL(H)}
    \norm{\bX_1 - \bX_2}_1
    \bigg) \\
    &\leq \frac{1}{\beta}
    \bigg(
        \frac{M^6}{16\alpha^3}
        \norm{\bQ}_1^2\norm{\bW}_{\sL(H)}
        \norm{\bd\bG_{p_1} - \bd\bG_{p_1}}_{\sL(\sP; \sJ_1(H))}\\
    &\qquad + \frac{C_{\bd\bG}M^4}{2\alpha^2}\norm{\bQ}_1 \norm{\bW}_{\sL(H)}\norm{\bX_1 - \bX_2}_1 \\
    &\qquad + \frac{C_{\bd\bG}M^4}{2\alpha^2}\norm{\bQ}_1^2 \norm{\bLambda_1 - \bLambda_2}_{\sL(H)}
        \bigg) \\
    &\leq\frac{1}{\beta}
    \bigg(
         \frac{L_{\bd\bG}M^6}{16\alpha^3}
        \norm{\bQ}_1^2\norm{\bW}_{\sL(H)}
        \norm{p_1 - p_2}_\sP\\
         &\qquad + \frac{L_{\bd\bG}C_{\bd\bG}M^{10}}{16\alpha^5}\norm{\bQ}_1^3 \norm{\bW}_{\sL(H)}\norm{p_1 - p_2}_\sP \\
         &\qquad + 
         \frac{L_{\bG}C_{\bd\bG}M^4}{2\alpha^2}\norm{\bQ}_1^2 
          \p{\frac{M^{10}\gamma}{16\alpha^5}\norm{\bQ}^2_1 
        + \frac{M^6}{4\alpha^3}\norm{\bQ}_1 }
        \norm{p_1 - p_2}_\sP\bigg). \\
        &= k_{\alpha, \beta, M, \bQ, \bW, \bG} \norm{p_1 - p_2}_{\sP}
\end{aligned}
$$
Therefore, we have demonstrated that \eqref{eqn: optimality condition} has a unique solution if the penalty parameter $\beta$ is chosen sufficiently large enough to sufficiently reduce the contributions of $\alpha, M, \norm{\bQ}_1,\norm{\bW}_{\sL\p{H}}$, and the constants associated with $\bG_{\p{\cdot}}$. 

We now analyze the Hessian operator $\bd^2 \mathcal L_{opt}\br{\cdot, \cdot} : \br{\sJ_1^s(H) \times \sP}\times\br{\sJ_1^s(H) \times \sP} \rightarrow \mathbb R$ associated with the Lagrangian cost functional \eqref{eqn: lagrangian functional 1} evaluated at $\p{\bX_{opt}, p_{opt}, \bLambda_{opt}}$. We hope to demonstrate that the second-order optimality condition is satisfied, i.e. that $\bd^2 \mathcal L_{opt}\br{(\bPhi,q),(\bPhi,q)}$ is positive definite for every $\p{\bPhi, q}$ in the critical cone $\mathcal K \p{\bX_{opt}, p_{opt}}$ (as defined in \eqref{eqn: critical cone}), We now define $\bd^2\mathcal L_{opt}\br{\cdot, \cdot}$ as follows
$$
\begin{aligned}
\bd^2\mathcal L_{opt}\br{(\bPhi,q),(\bPsi,r)} &=
    -\dual{\bLambda_{opt}, \br{\bPhi\bG_{p_{opt}}\bPsi + \bPsi\bG_{_{opt}}\bPhi}}\\
    &\qquad-\dual{\bLambda_{opt}, \br{\bPhi \bd\bG_{p_{opt}}(r)\bX_{opt}+\bX_{opt}\bd\bG_{p_{opt}}(r)\bPhi}} \\
    &\qquad-\dual{\bLambda_{opt}, \br{\bPsi \bd\bG_{p_{opt}}(q)\bX_{opt}+\bX_{opt}\bd\bG_{p_{opt}}(q)\bPsi}} \\
    &\qquad+ \beta\p{q,r}_{\sP} - \dual{ \bLambda_{opt}, \bX_{opt} \bd^2\bG_{p_{opt}}\p{q,r}\bX_{opt}} 
\end{aligned}
$$
for all $\p{\bPhi, q},\p{\Psi, r}\in\sJ_1(H)\times \sP$. Now restricting $\p{\bPhi, q}$ and $\p{\bPsi, r}$ to the critical cone $\mathcal K(\bX_{opt}, p_{opt})$ then yields
$$
\begin{aligned}
\bd^2\mathcal L_{opt}\br{(\bPhi,q),(\bPsi,r)} &=
   \beta\p{q,r}_{\sP} - \dual{ \bLambda_{opt}, \bX_{opt} \bd^2\bG_{p_{opt}}\p{q,r}\bX_{opt}} 
\end{aligned}
$$
since $\bPhi = \bPsi = 0$ must be satisfied for any element belonging to $\mathcal K\p{\bX_{opt}, p_{opt}}$. Taking $q = r$ then immediately indicates that $\bd^2 \mathcal L_{opt}$ is a positive semi-definite operator on $\mathcal K(\bX_{opt}, p_{opt})$ if $\beta$ is chosen sufficiently large, thereby satisfying the necessary second order condition for $\p{\bX_{opt}, p_{opt}}$ to qualify as a constrained minimizer to the cost functional \eqref{eqn: cost functional 1}. 

We formalize our findings in the following. 

\begin{theorem} \label{theorem: well-posedness problem 1}
Assume that $\bA: \mathcal D(\bA) \rightarrow H$ is the generator of a $C_0$-semigroup, $\bQ \in \sJ_1^s(H)$, and $\bW \in \sL^s(H)$. If conditions \eqref{eqn: G bound}, \eqref{eqn: G lipschitz bound}, and \eqref{eqn: dG lipshitz bound} are satisfied for the mapping $\bG_{(\cdot)}: \sP \rightarrow \sJ_1^s(H)$, then there exists a unique solution $\p{\bX_{opt}, \bLambda_{opt}, p} \in \sJ_1^s(H) \times \sL^s\p{H} \times \sP$ that satisfies the first-order optimality system \eqref{eqn: optimality system} associated with the constrained weighted trace minimization problem provided that the penalty parameter $\beta\in\mathbb R_+$ is chosen sufficiently large enough so that $k_{\alpha, \beta, M, \bQ, \bW, \bG} < 1$.

Further assume that \eqref{eqn: bounded second variation} is satisfied for the mapping $\bG_{(\cdot)}: \sP \rightarrow \sJ_1^s(H)$. Then $\p{\bX_{opt}, p_{opt}} \in \sJ_1^s(H)\times \sP$ is the unique constrained minimizer for the cost functional \eqref{eqn: cost functional 1} if $\beta \in \mathbb R_+$ is sufficiently large. 
\end{theorem}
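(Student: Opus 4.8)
The plan is to treat the two assertions separately, since the first reduces to a contraction-mapping argument on the parameter $p$ and the second to a second-order analysis of the Lagrangian.

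For the first assertion I would proceed by a fixed-point reduction. By Theorem~\ref{theorem: ARE well-posedness}, for every $p \in \sP$ the primal equation \eqref{eqn: primal problem} has a unique solution $\bX(p) \in \sJ_1^s(H)$; and since $\bG_p \in \sJ_1^s(H)$, the perturbation $-\bX(p)\bG_p$ is trace class, so Lemma~\ref{lemma: Lambda bound} applies and produces a unique $\bLambda(p) \in \sL^s(H)$ solving \eqref{eqn: dual problem}. Hence the first two equations of \eqref{eqn: optimality system} define a map $p \mapsto \p{\bX(p), \bLambda(p)}$, and the optimality condition \eqref{eqn: optimality condition} is equivalent to the fixed-point equation $p = f(p)$ with $f(p) := \frac{1}{\beta}\bd\bG_p^*\bX(p)\bLambda(p)\bX(p)$. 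I would then estimate $\norm{f(p_1) - f(p_2)}_\sP$ by inserting and subtracting intermediate terms — exactly the telescoping displayed just before the theorem statement — bounding each difference with the uniform estimates \eqref{eqn: ARE solution bound} and Lemma~\ref{lemma: Lambda bound}, the Lipschitz estimates of Lemma~\ref{lemma: continuity wrt G} and Lemma~\ref{lemma: Lambda Difference bound}, and the Lipschitz assumption \eqref{eqn: dG lipshitz bound} on $\bd\bG_{(\cdot)}$. This yields a constant $k_{\alpha,\beta,M,\bQ,\bW,\bG}$ proportional to $1/\beta$; choosing $\beta$ large enough makes $k < 1$, so the Banach fixed point theorem \cite[Theorem 3.7-1]{ciarlet2025linear} gives a unique fixed point $p_{opt}$, hence a unique triple $\p{\bX_{opt}, \bLambda_{opt}, p_{opt}}$ solving \eqref{eqn: optimality system}.

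For the second assertion I would pass to the Hessian of the Lagrangian. Since $\p{\bX_{opt}, \bLambda_{opt}, p_{opt}}$ already satisfies the first-order system, it remains to verify the second-order sufficient condition on the critical cone $\mathcal K(\bX_{opt}, p_{opt})$ of \eqref{eqn: critical cone}. I would compute $\bd^2\mathcal L_{opt}\br{(\bPhi,q),(\bPsi,r)}$ by differentiating \eqref{eqn: lagrangian functional 1} twice and simplifying via \eqref{eqn: operator adjoint trace identity} together with $\bX_{opt}^* = \bX_{opt}$ and $\bLambda_{opt}^* = \bLambda_{opt}$. On $\mathcal K(\bX_{opt}, p_{opt})$ every admissible direction has $\bPhi = \bPsi = \mathbf 0$, so all mixed and pure-$\bX$ terms vanish and the quadratic form collapses to $\beta\p{q,q}_\sP - \dual{\bLambda_{opt}, \bX_{opt}\bd^2\bG_{p_{opt}}(q,q)\bX_{opt}}$. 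Using \eqref{eqn: ARE solution bound} to bound $\norm{\bX_{opt}}_1$, Lemma~\ref{lemma: Lambda bound} to bound $\norm{\bLambda_{opt}}_{\sL(H)}$, the two-sided ideal property of $\sJ_1(H)$, and the assumption \eqref{eqn: bounded second variation}, the second term is bounded by $C\norm{q}_\sP^2$ with $C$ independent of $\beta$; hence for $\beta > C$ the form is positive definite on $\mathcal K(\bX_{opt}, p_{opt})$, and the second-order sufficient condition certifies that $\p{\bX_{opt}, p_{opt}}$ is a strict local constrained minimizer of \eqref{eqn: cost functional 1}. To upgrade this to a unique \emph{global} minimizer, I would note that any constrained minimizer is a stationary point of the reduced functional $p \mapsto \trace{\bX(p)\bW} + \frac{\beta}{2}\norm{p}_\sP^2$ and therefore, with its associated dual variable, solves \eqref{eqn: optimality system}; by the first assertion this solution is unique, so there is at most one constrained minimizer, while coercivity (the $\frac{\beta}{2}\norm{p}_\sP^2$ term dominates since $\trace{\bX(p)\bW} \geq 0$) and nonemptiness of the admissible set guarantee existence.

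I expect the main obstacle to be the contraction estimate of the first assertion: $f$ is a triple operator product whose three factors each depend on $p$ only in a nested, nonlinear way through $\bG_p \mapsto \bX(p) \mapsto \bLambda(p)$, so the telescoping must be organized carefully and every constant propagated cleanly through Theorem~\ref{theorem: ARE well-posedness} and Lemmas~\ref{lemma: continuity wrt G}--\ref{lemma: Lambda Difference bound} and Lemma~\ref{lemma: Lambda bound}. A secondary subtlety is making the ``local $\Rightarrow$ global'' step airtight, since second-order sufficiency is intrinsically local: one must lean on uniqueness of the stationary point together with coercivity of the penalized functional, rather than on any (unavailable) convexity of $p \mapsto \trace{\bX(p)\bW}$.
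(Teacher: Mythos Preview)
Your proposal is correct and follows essentially the same route as the paper: reduce the first-order system to the fixed-point equation $p = \frac{1}{\beta}\bd\bG_p^*\bX(p)\bLambda(p)\bX(p)$, telescope the difference and bound each piece via Theorem~\ref{theorem: ARE well-posedness} and Lemmas~\ref{lemma: Lambda bound}, \ref{lemma: continuity wrt G}, \ref{lemma: Lambda Difference bound} together with \eqref{eqn: dG lipshitz bound}, then verify second-order sufficiency on the critical cone by observing that $\bPhi = \mathbf 0$ collapses the Hessian to $\beta\p{q,q}_\sP - \dual{\bLambda_{opt}, \bX_{opt}\bd^2\bG_{p_{opt}}(q,q)\bX_{opt}}$. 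Your explicit ``local $\Rightarrow$ global'' step via coercivity and uniqueness of the stationary point is a welcome clarification that the paper leaves implicit.
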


In general, Theorem \ref{theorem: well-posedness problem 1} indicates that $\beta$ can be chosen arbitrarily large to force $\mathcal J_\beta(\cdot)$ to have a unique minimizer. 

\section{Approximate Control Constraint Enforcement}
\label{sec: problem 2}
In this section, we consider a penalized technique to approximately enforce a trace constraint on the operator $\bG_p \in \sJ_1^s(H)$ for all $p \in \sP$. One important property of this penalization approach is that the approximate constraint enforcement becomes exact as the penalization parameter reaches positive infinity. It is through this mechanism by which we are able to determine that a wide class of sensor placement and design problems admits a unique constrained minimizer. Much of the general techniques used to analyze the problem described in this section have been discussed in the previous section. Because of this, we will only briefly touch upon details that have direct analogs to what was discussed for the previous problem and focus our attention on the technicalities associated with the current problem of interest.

\subsection{Problem Statement}
The penalized optimization problem analyzed in this section is to seek a constrained minimizer $\p{\bX_{opt}, p_{opt}} \in \sJ_1^s(H) \times \sP$ for the following cost functional
\begin{equation}
\label{eqn: cost functional 2}
    \mathcal J_\beta (\bX, p) := \trace{\bX\bW} + \frac{\beta}{2} \br{\trace{\bG_p} - \gamma}^2
\end{equation}
subject to
\begin{equation}
    \bA\bX + \bX\bA^* - \bX\bG_p\bX+\bQ = \mathbf 0,
\end{equation}
where we assume that $\bQ \in \sJ_1^s(H)$ is again a positive semi-definite trace-class operator. 

\subsection{First-Order Optimality System}
The Lagrangian functional associated with the constrained optimization problem studied in this section is the following
\begin{equation} \label{eqn: Lagrangian functional 2}
    \mathcal L(\bX,p,\bLambda) := 
    \dual{\mathbf I, \bX\bW} 
    + \frac{\beta}{2}\br{\trace{\bG_p} - \gamma}^2
    + \dual{\bLambda, \bA \bX + \bX\bA^* - \bX\bG_p\bX + \bQ}.
\end{equation}
Taking the first Fr\'echet derivative of $\mathcal  L(\cdot,\cdot,\cdot)$ with respect to $\bX \in \sJ_1^s(H)$, $p \in \sP$, and $\bLambda \in \sL^s(H)$ yields the first-order optimality system associated with \eqref{eqn: Lagrangian functional 2} stated below. 

\begin{subequations} \label{eqn: optimality system 2}
    \noindent\textrm{\textbf{Primal Problem: }}
    \begin{equation}
    \label{eqn: primal problem 2}
        \bA\bX+\bX\bA^*-\bX\bG_p\bX+\bQ = 0
    \end{equation}
    \textrm{\textbf{Dual Problem: }}
    \begin{equation}
    \label{eqn: dual problem 2}
        \p{\bA^* - \bG_p\bX}\bLambda + \bLambda\p{\bA-\bX\bG_p} = -\bW
    \end{equation}
    \textrm{\textbf{Optimality Condition: }}
    \begin{equation}
    \label{eqn: optimality condition 2}
    p = \frac{1}{\norm{\bX\bLambda\bX}_{\sL(H)}}\br{\bd\bG_p^*\bd\bG_p}^{-1} \bd\bG_p^* \bX\bLambda\bX\bd\bG_p p
    \end{equation}
    \textrm{\textbf{Control Operator Trace Constraint: }}
    \begin{equation}
    \label{eqn: operator trace constraint}
        \trace{\bG_p} = \gamma + \frac{1}{\beta}\norm{\bX\bLambda\bX}_{\sL(H)}
    \end{equation}
\end{subequations}
Because the derivation of \eqref{eqn: primal problem 2} and \eqref{eqn: dual problem 2} is nearly identical to that of \eqref{eqn: primal problem} and \eqref{eqn: dual problem}, we will only discuss the derivation of \eqref{eqn: optimality condition 2} and \eqref{eqn: operator trace constraint} in the following. 

\begin{remark} \label{remark: limit}
An inspection of \eqref{eqn: operator trace constraint} indicates that $\trace{\bG_p} \rightarrow \gamma$ as $\beta \rightarrow +\infty$. Applying \eqref{eqn: operator trace constraint} in \eqref{eqn: cost functional 2} then indicates that $\mathcal J_\beta(\bX;p)$ becomes
$\trace{\bX\bW}$ as $\beta \rightarrow +\infty$. 
\end{remark}

\subsubsection{Derivation of the Optimality Condition}
The chain rule and necessary stationary implies that
$$
\frac{\partial \mathcal L}{\partial p}(q) := \frac{\partial \mathcal L}{\partial \bG_p}\br{\frac{\partial \bG_p}{\partial p}(q)} = 0
$$
for all $q \in \sP$.Because we have assumed that $\bd\bG_p \neq 0$ in \eqref{eqn: nonzero assumption}, we have that 
\begin{equation}
\label{eqn: derivative part 1}
\begin{aligned}
    \frac{\partial \mathcal L}{\partial \bG_p}(\mathbf H) &:= \beta\dual{\mathbf I, \mathbf H} \br{\trace{\bG_p} - \gamma} - \dual{\bLambda , \bX\mathbf H \bX} \\
    &=\beta[\trace{\bG_p}-\gamma]\dual{\mathbf I, \mathbf H} - \dual{\bX\bLambda\bX,\mathbf H} \\
    &= 0
\end{aligned}
\end{equation}
for all $\mathbf H \in \sJ_1(H)$. This then implies that 
$$
    \beta\br{\trace{\bG_p} - \gamma}\mathbf I = \bX\bLambda\bX
$$
and \eqref{eqn: operator trace constraint} follows after norming both sides of the above equation with the $\sL(H)$ norm and performing some algebraic manipulation. 

Taking $\mathbf H = \bd\bG_p(q)$ for all $q \in \sP$ in \eqref{eqn: derivative part 1} results in 
$$
\begin{aligned}
    \frac{\partial \mathcal L}{\partial p}(\bd\bG_p(q))
    &=\beta[\trace{\bG_p}-\gamma]\dual{\mathbf I, \bd\bG_p(q)} - \dual{\bX\bLambda\bX,\bd\bG_p(q)} \\
    &=\beta\br{\trace{\bG_p} - \gamma}
    \p{\bd\bG_p^*\mathbf I, q}_\sP -
    \p{\bd\bG_p^*\bX\bLambda\bX,q}_{\sP} \\
    &= 0.
\end{aligned}
$$
This then implies that
$$
    \bd\bG^*_p\mathbf I = \frac{1}{\beta\br{\trace{\bG_p} - \gamma}} \bd\bG_p^*\bX\bLambda\bX.
$$
We now apply \eqref{eqn: operator trace constraint} to obtain
$$
    \bd\bG^*_p\mathbf I = \frac{1}{\norm{\bX\bLambda\bX}_{\sL\p{H}}} \bd\bG_p^*\bX\bLambda\bX.
$$
Right-multiplying this equation on both sides by $\bd\bG_p p$ then yields
$$
    \bd\bG^*_p\bd\bG_pp = \frac{1}{\norm{\bX\bLambda\bX}_{\sL\p{H}}} \bd\bG_p^*\bX\bLambda\bX\bd\bG_pp
$$
and \eqref{eqn: optimality condition 2} is obtained by left-multiplying both sides of the above by $\br{\bd\bG^*_p\bd\bG_p}^{-1}$ after recalling \eqref{eqn: invertability assumption}. With this, we are ready to analyze the first-order optimality system \eqref{eqn: optimality system 2}.

\subsubsection{Unique Solution to First-Order Optimality System}
We begin our analysis by noticing that there is a continuous mapping $p \mapsto\p{\bX(p), \bLambda(p)}$ where $\bX \in \sJ_1^s(H)$ and $\bLambda\in\sL^s(H)$ correspond to the solutions to \eqref{eqn: primal problem 2} and \eqref{eqn: dual problem 2} respectively. This observation is made using the same argument as in \S\ref{subsubsec: well-posedness 1}. With this, we may focus on determining that \eqref{eqn: optimality condition 2} has a unique fixed point $p \in \sP$ on the manifold induced by \eqref{eqn: operator trace constraint}. 

Let us write $f(\cdot): \sP \rightarrow \sP$ as the nonlinear function defined in \eqref{eqn: optimality condition 2}, i.e.
$$
    f(p) := \frac{1}{\norm{\bX\bLambda\bX}_{\sL(H)}}\br{\bd\bG_p^*\bd\bG_p}^{-1} \bd\bG_p^* \bX(p)\bLambda(p)\bX(p)\bd\bG_p p.
$$
It is our job again to determine the conditions under which there exists a positive constant $k < 1$ that satisfies
$$
    \norm{f(p_1) - f(p_2)}_{\sP} \leq k \norm{p_1 - p_2}_{\sP}
$$
for any two $p_1, p_2 \in \sP$. 

Notice that \eqref{eqn: optimality condition 2} is scale-invariant. Let us take 
\begin{equation} \label{eqn: affine transformation}
p:= s\widetilde p
\end{equation}
where $s\in \mathbb R_+$ is a positive scaling factor and $\widetilde p$ is a reference coordinate space defined such that $\sup \norm{\widetilde p}_\sP = 1$. The scale-invariance arises from the observation that, no matter how one chooses the scaling factor $s$ in the affine transformation in \eqref{eqn: affine transformation}, it never appears in the definition of the fixed-point problem \eqref{eqn: optimality condition 2}. This is because the scaling factors $s$ cancel out in \eqref{eqn: optimality condition 2} when applying this affine coordinate transformation. This then allows us to take $\sup \norm{p}_{\sP} = 1$ in our analysis without loss of generality. 

We now begin our well-posedness analysis by partitioning the fixed point equation into the following
$$
    p = \br{(I)(II)(III)}p,
$$
where we have denoted
$$
    (I) := \frac{1}{\norm{\bX\bLambda\bX}_{\sL(H)}}, \quad
    (II) := \br{\bd\bG^*_p\bd\bG_p}^{-1}, \quad 
    (III) := \bd\bG_p^*\bX\bLambda\bX\bd\bG_p.
$$
These partitioned terms are bounded as follows. 
\begin{equation}
    |(I)| \leq \frac{1}{\mu},
\end{equation}
where $\mu := \inf_{p \in \sP} \norm{\bX(p)\bLambda(p)\bX(p)}_{\sL(H)}$. 

The lower bound of $\mu$ is bounded above by zero since $\bX = \mathbf 0$ if and only if $\bQ = 0$ and $\bLambda \neq \mathbf 0$ as long as $\bW \neq \mathbf \mathbf 0$. To see this, consider the Bochner integral form of the operator-valued Riccati equation \eqref{eqn: Bochner ARE}. First, assume that $\bX = \mathbf 0$, then $\bQ$ must be $\mathbf 0$ since $\int_0^{+\infty}\bS(t)\bQ\bS^*(t)dt=\mathbf 0$ if and only if $\bQ = \mathbf 0$. Now assume $\bQ = \mathbf 0$, then $\int_0^{+\infty}\bS(t)\p{\bX\bG_p\bX}\bS^*(t)dt = \mathbf 0$ implies that $\bX = \mathbf 0$. Next, $\bLambda$ satisfies the Bochner integral form of the dual problem \eqref{eqn: adjoint representation}. The linearity of \eqref{eqn: adjoint representation} and well-posedness of the equation then implies that $\bLambda = 0$ if and only if $\bW = 0$. Hence, we must have that $\inf_{p\in\sP} \norm{\bX(p)\bLambda(p)\bX(p)}_{\sL\p{H}} > 0.$

We then define 
$$
    \norm{(II)}_{\sL\p{\sP}} := K.
$$
And finally, 
$$
    \norm{(III)}_{\sL\p{\sP}} \leq C_{\bd\bG}^2 \frac{M^6}{8\alpha^3} \norm{\bQ}_1^2\norm{\bW}_{\sL\p{H}}.
$$
It is again sufficient to assume that $\sup_{p \in \sP}\norm{p}_{\sP} = 1$ due to the scale invariance of the fixed point equation. With this, we determine that 
$$
\begin{aligned}
    \norm{(I)_1(II)_1(III)_1p_1 - (I)_2(II)_2(III)_2p_2}_{\sP} &\leq
    \left| (I)_1 - (I)_2 \right|\norm{(II)_2}_{\sL\p{\sP}}
    \norm{(III)_2}_{\sL\p{\sP}}\norm{p_2}_{\sP} \\
    &\quad + |(I)_1|\norm{(II)_1 - (II)_2}_{\sL\p{\sP}} \norm{(III)_3}_{\sL\p{\sP}} \norm{p_2}_{\sP}\\
    &\quad + |(I)_1|\norm{(II)_1}_{\sL\p{\sP}} \norm{(III)_1 - (III)_2}_{\sL(\sP)} \norm{p_2}_{\sP}\\
    &\quad + |(I)_1|\norm{(II)_1}_{\sL\p{\sP}}\norm{(III)_1}_{\sL\p{\sP}}\norm{p_1 - p_2}_\sP \\
    &\leq \frac{KC^2_{\bd\bG}M^6}{8\alpha}\norm{\bQ}_1^2\norm{\bW}_{\sL\p{H}}
    |(I)_1-(I)_2|\\
    &\quad + \frac{C^2_{\bd\bG} M^6}{8\alpha^3\mu}\norm{\bQ}_1^2  \norm{\bW}_{\sL(H)} \norm{(II)_1 - (II)_2}_{\sL\p{\sP}}  \\
    &\quad + \frac{K}{\mu} \norm{(III)_1 - (III)_2}_{\sL\p{\sP}} \\
    &\quad + \frac{KC_{\bd\bG}^2 M^6}{8\alpha^3\mu} \norm{p_1 - p_2}_{\sP}.
\end{aligned}    
$$
We now bound every difference term in the above inequality. First, see that
$$
\begin{aligned}
    |(I)_1 - (I)_2| &= \left|\frac{1}{\norm{\bX_1\bLambda_1\bX_1}_{\sL\p{H}}} - \frac{1}{\norm{\bX_2\bLambda_2\bX_2}_{\sL\p{H}}} \right| \\
    &\leq \frac{\norm{\bX_2\bLambda_2\bX_2 -\bX_1\bLambda_1\bX_1}_{\sL\p{H}}}{\norm{\bX_1\bLambda_1\bX_1}_{\sL(H)} \norm{\bX_2\bLambda_2\bX_2}_{\sL\p{H}}}\\
    &\leq \frac{1}{\mu^2} \bigg(\norm{\bX_2-\bX_1}_{\sL(H)}\norm{\bLambda_1}_{\sL\p{H}}\norm{\bX_1}_{\sL\p{H}}\\
    &\qquad + \norm{\bX_2}_{\sL\p{H}} \norm{\bLambda_2-\bLambda_1}_{\sL\p{H}}\norm{\bX_1}_{\sL\p{H}} \\
    &\qquad + \norm{\bX_2}_{\sL\p{H}}\norm{\bLambda_2}_{\sL\p{H}}\norm{\bX_2-\bX_1}_{\sL\p{H}}\bigg) \\
    &\leq \frac{M^4}{2\alpha^2\mu^2} \norm{\bQ}_1\norm{\bW}_{\sL\p{H}} \norm{\bX_1 - \bX_2}_1 +  \frac{M^4}{4\alpha^2\mu^2}\norm{\bQ}_1^2\norm{\bLambda_1-\bLambda_2}_{\sL\p{H}} \\
    &\leq \frac{L_\bG M^{10}}{16\alpha^5\mu}\norm{\bQ}_1^3\norm{\bW}_{\sL\p{H}} \norm{p_1-p_2}_{\sP} + 
    \frac{L_\bG M^4}{4\alpha^2\mu^2}\p{\frac{M^{10}\gamma_\beta}{16\alpha^5}\norm{\bQ}^4_1 + \frac{M^6}{4\alpha^3}\norm{\bQ}^3_1}\norm{p_1-p_2}_\sP\\
    &= k_{(I),\alpha, \bQ} \norm{p_1 - p_2}_\sP,
\end{aligned}
$$
where we have defined $\gamma_\beta:= \gamma + \frac{1}{\beta} \sup_{p\in\sP}\norm{\bX(p)\bLambda(p)\bX(p)}_{\sL\p{H}}$. 
Next, from the Lipshitz continuity of $\bd\bG_{(\cdot)}$ with respect to $p \in \sP$, we have that
$$
    \norm{(II)_1-(II)_2}_{\sL\p{\sP}} \leq 
    L \norm{p_1 - p_2}_{\sP}.
$$
Finally, we derive a Lipschitz bound for the final partitioned difference term. Leveraging the analysis presented in the previous section, we have that
$$
\begin{aligned}
    &\norm{(III)_1 - (III)_2}_{\sL\p{H}}\\ &\quad\leq 
    \norm{\p{\bd\bG_{p_1}^*\bX_1\bLambda_1\bX_1 - \bd\bG_{p_2}^*\bX_2\bLambda_2\bX_2}\bd\bG_{p_2}}_{\sL\p{\sP}} + \norm{\bd\bG_{p_1}^*\bX_1\bLambda_1\bX_1\p{\bd\bG_{p_1}-\bd\bG_{p_2}}}_{\sL\p{H}} \\
    &\quad\leq C_{\bd\bG}\bigg(
         \frac{L_{\bd\bG}M^6}{16\alpha^3}
        \norm{\bQ}_1^2\norm{\bW}_{\sL(H)}
        \norm{p_1 - p_2}_\sP\\
         &\qquad + \frac{L_{\bd\bG}C_{\bd\bG}M^{10}}{16\alpha^5}\norm{\bQ}_1^3 \norm{\bW}_{\sL(H)}\norm{p_1 - p_2}_\sP \\
         &\qquad + 
         \frac{L_{\bG}C_{\bd\bG}M^4}{2\alpha^2}\norm{\bQ}_1^2 
          \p{\frac{M^{10}\gamma_\beta}{16\alpha^5}\norm{\bQ}^2_1 
        + \frac{M^6}{4\alpha^3}\norm{\bQ}_1 }
        \norm{p_1 - p_2}_\sP\bigg) \\
        &\qquad +
        \frac{C_{\bd\bG}L_{\bd\bG}M^6}{8\alpha^3}\norm{\bQ}_1^2 \norm{\bW}_{\sL\p{H}}\norm{p_1 - p_2}_\sP \\
        &= k_{(III), \alpha, bQ}\norm{p_1 - p_2}_{\sP} .
\end{aligned}
$$

Putting this all together, we have determined that there exists a constant $k_{M,\alpha, \beta, \bG,\bQ,\bW} \in \mathbb R_+$ dependent on $M, \alpha, \beta, \norm{\bG}_1, \norm{\bQ}_1, \norm{\bW}_{\sL(H)}$ so that
$$
    \norm{f(p_1) - f(p_2)}_{\sP} \leq k_{M, \alpha, \beta, \bG, \bQ, \bW} \norm{p_1 - p_2}_{\sP}
$$
for any two $p_1, p_2 \in \sP$. Inspection of the definition of the constant $k_{M, \alpha, \beta, \bG, \bQ, \bW}$ indicates that if $\alpha, \beta\in \mathbb R_+$ is sufficiently large, and $\gamma,\norm{\bQ}_1, \norm{\bW} \in \mathbb R_+$ is sufficiently small, then \eqref{eqn: optimality condition 2} has a unique fixed point. 

Evaluating the second variation of $\mathcal L(\cdot,\cdot,\cdot)$ at $(\bX_{opt}, p_{opt}, \bLambda_{opt})$ in the direction vectors contained in the critical cone $\mathcal K(\bX_{opt}, p_{opt})$, as defined in \eqref{eqn: critical cone}, then yields 
$$
\begin{aligned}
    \bd^2\mathcal L_{opt}[(\bPhi, q), (\bPhi, q)] &= 
    \beta\br{\trace{\bG_{p_{opt}}} - \gamma}\dual{\mathbf I, \bd^2\bG_{p_{opt}}(q,q)}\\
    &\qquad+ \beta\dual{\mathbf I, \bd\bG_{p_{opt}}(q)}^2
    - \dual{\bLambda_{opt},\bX_{opt}\bd^2\bG_{p_{opt}}(q,q)\bX_{opt}} \\
    &= \norm{\bX_{opt}\bG_{p_{opt}} \bX_{opt}}_{\sL\p{H}}\dual{\mathbf I, \bd^2\bG_{p_{opt}}(q,q)}\\
    &\qquad+ \beta\dual{\mathbf I, \bd\bG_{p_{opt}}(q)}^2
    - \dual{\bLambda_{opt},\bX_{opt}\bd^2\bG_{p_{opt}}(q,q)\bX_{opt}} \\
\end{aligned}
$$
for all $\p{\bPhi, q} \in \mathcal K\p{\bX_{opt}, p_{opt}}.$ Inspecting the expression above indicates that $\bd^2\mathcal L_{opt}[(\bPhi, q), (\bPhi, q)]$ is positive definite if $\beta \in \mathbb R_+$ is sufficiently large after recalling \eqref{eqn: operator trace constraint}. 

\begin{theorem} \label{theorem: well-posedness optimization 2}
Assume that $\bA: \mathcal D(\bA) \rightarrow H$ is the generator of a $C_0$-semigroup, $\bQ \in \sJ_1^s(H)$, and $\bW \in \sL^s(H)$. Then, if conditions \eqref{eqn: G bound}, \eqref{eqn: G lipschitz bound}, \eqref{eqn: dG lipshitz bound}, and \eqref{eqn: invertability assumption} are satisfied for the mapping $\bG_{(\cdot)}: \sP \rightarrow \sJ_1^s(H)$, then there exists a unique solution $\p{\bX_{opt}, \bLambda_{opt}, p} \in \sJ_1^s(H) \times \sL^s\p{H} \times \sP$ that satisfies the first-order optimality system \eqref{eqn: optimality system 2} associated with the constrained weighted trace minimization problem, provided that the penalty parameter $\beta\in\mathbb R_+$ is chosen sufficiently large enough and that $\alpha \in \mathbb R_+$ is sufficiently large to reduce the contributions of $\norm{\bQ}_1, \norm{\bW}_{\sL(H)},$ and $M$ in the definition of the constant $k_{M, \alpha, \beta, \bG, \bQ, \bW}$.

Further assume that \eqref{eqn: bounded second variation} is satisfied for the mapping $\bG_{(\cdot)}: \sP \rightarrow \sJ_1^s(H)$. Then $\p{\bX_{opt}, p_{opt}} \in \sJ_1^s(H)\times \sP$ is the unique constrained minimizer for the cost functional \eqref{eqn: cost functional 2} provided that the conditions described above for uniqueness are satisfied. 
\end{theorem}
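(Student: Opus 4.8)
The plan is to follow the template of the proof of Theorem \ref{theorem: well-posedness problem 1}, splitting the argument into two parts: unique solvability of the first-order optimality system \eqref{eqn: optimality system 2}, then verification of the sufficient second-order condition.

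For the first part, I would first record that for each fixed $p\in\sP$, Theorem \ref{theorem: ARE well-posedness} and Lemma \ref{lemma: Lambda bound} give a unique pair $\p{\bX(p),\bLambda(p)}\in\sJ_1^s(H)\times\sL^s(H)$ solving \eqref{eqn: primal problem 2}--\eqref{eqn: dual problem 2}, while Lemmas \ref{lemma: continuity wrt G} and \ref{lemma: Lambda Difference bound} make $p\mapsto\p{\bX(p),\bLambda(p)}$ Lipschitz. It then remains to solve the optimality condition \eqref{eqn: optimality condition 2}, which I cast as the fixed-point problem $p=f(p)$ with $f$ as above. Using the scale invariance of \eqref{eqn: optimality condition 2} under the rescaling \eqref{eqn: affine transformation} to normalize $\sup_{p\in\sP}\norm{p}_\sP=1$, I would decompose $f$ through the factors $(I),(II),(III)$, bound each factor and each of the four cross-difference terms via \eqref{eqn: ARE solution bound}, \eqref{eqn: G bound}, \eqref{eqn: dG lipshitz bound}, \eqref{eqn: invertability assumption} and the Lipschitz estimates of Lemmas \ref{lemma: continuity wrt G} and \ref{lemma: Lambda Difference bound}, and collect these into the contraction constant $k_{M,\alpha,\beta,\bG,\bQ,\bW}$. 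The Banach fixed point theorem \cite[Theorem 3.7-1]{ciarlet2025linear} then yields a unique fixed point once $\beta$ and $\alpha$ are large enough and $\norm{\bQ}_1,\norm{\bW}_{\sL(H)}$ small enough to force this constant below one; combined with the uniqueness of $\p{\bX(p),\bLambda(p)}$ this is the claimed unique solution to \eqref{eqn: optimality system 2}.

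For the second part, I would compute the second Fr\'echet derivative $\bd^2\mathcal L_{opt}$ of \eqref{eqn: Lagrangian functional 2} at $\p{\bX_{opt},p_{opt},\bLambda_{opt}}$, restrict its arguments to the critical cone $\mathcal K(\bX_{opt},p_{opt})$ of \eqref{eqn: critical cone}, on which the $\bPhi$-component vanishes, and use the identity $\beta\br{\trace{\bG_{p_{opt}}}-\gamma}\mathbf I=\bX_{opt}\bLambda_{opt}\bX_{opt}$ extracted from \eqref{eqn: operator trace constraint} to rewrite the surviving terms. Assumption \eqref{eqn: bounded second variation} controls the $\bd^2\bG_{p_{opt}}(q,q)$ contributions, so that the term $\beta\dual{\mathbf I,\bd\bG_{p_{opt}}(q)}^2$ dominates for $\beta$ large, making $\bd^2\mathcal L_{opt}[(\bPhi,q),(\bPhi,q)]$ positive definite on the cone. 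This sufficient second-order condition certifies that the unique stationary point obtained above is the unique constrained minimizer of \eqref{eqn: cost functional 2}.

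I expect the main obstacle to be controlling the factor $(I)=\norm{\bX\bLambda\bX}_{\sL(H)}^{-1}$ in $f$, which requires a strictly positive lower bound $\mu:=\inf_{p\in\sP}\norm{\bX(p)\bLambda(p)\bX(p)}_{\sL(H)}>0$ --- a nondegeneracy fact rather than a stability estimate. I would establish it by reading off from the Bochner representations \eqref{eqn: Bochner ARE} and \eqref{eqn: adjoint representation} that $\bX=\mathbf 0$ if and only if $\bQ=\mathbf 0$ and $\bLambda=\mathbf 0$ if and only if $\bW=\mathbf 0$, so that under the standing hypotheses $\bQ\neq\mathbf 0$, $\bW\neq\mathbf 0$ the product $\bX\bLambda\bX$ is uniformly bounded away from zero. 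A secondary delicacy is the self-referential appearance of $\gamma_\beta=\gamma+\beta^{-1}\sup_{p\in\sP}\norm{\bX(p)\bLambda(p)\bX(p)}_{\sL(H)}$ in the difference bounds, which must be pinned down by pairing $\mu$ with the uniform upper bound $\norm{\bX(p)\bLambda(p)\bX(p)}_{\sL(H)}\leq\frac{M^6}{8\alpha^3}\norm{\bQ}_1^2\norm{\bW}_{\sL(H)}$ following from \eqref{eqn: ARE solution bound} and Lemma \ref{lemma: Lambda bound} before the contraction constant is well defined.
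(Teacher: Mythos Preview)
Your proposal is correct and follows essentially the same approach as the paper: the same reduction to a fixed-point problem on $p$ via the factorization $(I)(II)(III)$, the same scale-invariance normalization, the same nondegeneracy argument for $\mu>0$ via \eqref{eqn: Bochner ARE} and \eqref{eqn: adjoint representation}, and the same second-order verification on $\mathcal K(\bX_{opt},p_{opt})$ using the identity $\beta\br{\trace{\bG_{p_{opt}}}-\gamma}\mathbf I=\bX_{opt}\bLambda_{opt}\bX_{opt}$. Your anticipation of the $\gamma_\beta$ bookkeeping and the need to pair $\mu$ with the uniform upper bound on $\norm{\bX\bLambda\bX}_{\sL(H)}$ is in fact slightly more explicit than the paper's own treatment.
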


Theorem \ref{theorem: well-posedness optimization 2} provides the necessary conditions needed for $\mathcal J_\beta(\cdot)$ to have a constrained minimizer. In general, the constrained minimizer is not guaranteed to be unique unless $\alpha$ is sufficiently large enough to reduce the effects of $\norm{\bQ}_1, \norm{\bW}_{\sL\p{H}}$, and $M$ in the definition of the contraction constant $k_{M, \alpha, \beta, \bG, \bQ, \bW}$. This is in contrast to the constrained optimization problem presented in \S\ref{sec: problem 1} where $\beta$ can be chosen large enough to guarantee the uniqueness. 

Assume for now that the conditions in Theorem \ref{theorem: well-posedness optimization 2} are satisfied and $\alpha$ is sufficiently large enough to guarantee the uniqueness of the fixed point for $\beta$ chosen large enough. Let us now define $(\bX^\beta_{opt}, p^\beta_{opt})\in \sJ_1^s(H) \times \sP$ to be the constrained minimizer associated with the choice of $\beta$ in the definition of \eqref{eqn: cost functional 2}. Theorem \ref{theorem: well-posedness optimization 2} suggests that there is a unique constrained minimizer to $\mathcal J_\beta(\cdot)$ for every $\beta > \beta_{min}$ for some threshold value $\beta_{min}$. Taking $\beta \rightarrow +\infty$ preserves the contractive property of the fixed point problem \eqref{eqn: optimality condition 2} while simultaneously enforcing the constraint $\trace{\bG_p} = \gamma$. This then implies that the limit $\lim_{\beta\rightarrow \infty} \p{\bX^\beta_{opt}, p^\beta_{opt}} = \p{\bX^\infty_{opt}, p^\infty_{opt}}$ is well-defined. We also observe that the Hessian operator, while remaining positive, becomes unbounded in this limit. This suggests that the stationary point associated with the constrained minimizer $\p{\bX^\infty_{opt}, p^\infty_{opt}}$ is not twice differentiable. This discussion along with Remark \ref{remark: limit} then implies the following.

\begin{corollary}
Assume that the conditions in Theorem \ref{theorem: well-posedness optimization 2} are satisfied. Then there exists a unique constrained minimizer $\p{\bX_{opt},p_{opt}} \in \sJ_1^s(H) \times \sP$ that satisfies the following constrained minimization problem. 
\begin{equation*}
    \min_{p \in \sP} \trace{\bX(p)\bW}
\end{equation*}
subject to
\begin{equation*}
\left\{
\begin{aligned}
    \bA\bX + \bX\bA^* - \bX\bG_p\bX + \bQ &= \mathbf 0 \\
    \trace{\bG_p} &= \gamma.
\end{aligned}
\right.
\end{equation*}
\end{corollary}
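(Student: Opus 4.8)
The plan is to realize the constrained minimizer as the limit, as $\beta \to +\infty$, of the penalized minimizers $\p{\bX^\beta_{opt}, p^\beta_{opt}}$ whose existence and uniqueness are furnished by Theorem \ref{theorem: well-posedness optimization 2}. First I would fix $\alpha \in \mathbb R_+$ large enough (as that theorem requires) that the contraction modulus $k_{M,\alpha,\beta,\bG,\bQ,\bW}$ of the fixed-point map $f$ in \eqref{eqn: optimality condition 2} stays strictly below $1$ for all $\beta$ above a threshold $\beta_{min}$. Inspecting the definition of that modulus, the only $\beta$-dependence enters through $\gamma_\beta = \gamma + \beta^{-1}\sup_{p\in\sP}\norm{\bX(p)\bLambda(p)\bX(p)}_{\sL(H)}$, which is monotone decreasing in $\beta$ with limit $\gamma$; hence $k_{M,\alpha,\beta,\bG,\bQ,\bW}$ is nonincreasing in $\beta$ and converges to a limit $k_\infty < 1$. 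Replacing $\gamma_\beta$ by $\gamma$ throughout yields a limiting map $f_\infty : \sP \to \sP$ that is itself a contraction with modulus $k_\infty$, and therefore has a unique fixed point $p^\infty_{opt}$ by the Banach fixed-point theorem.

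Second, I would establish the convergence $p^\beta_{opt} \to p^\infty_{opt}$. Since $f$ (at penalty level $\beta$) converges to $f_\infty$ uniformly on the bounded reference set $\Br{p \in \sP : \norm{p}_\sP \le 1}$ (again because $\gamma_\beta \to \gamma$ and all the operator bounds entering $f$ are uniform in $\beta$), the standard perturbation estimate for fixed points of contractions gives $\norm{p^\beta_{opt} - p^\infty_{opt}}_\sP \le (1-k_\infty)^{-1}\sup_{\norm{p}_\sP\le 1}\norm{f(p)-f_\infty(p)}_\sP \to 0$. Lemma \ref{lemma: continuity wrt G} and Lemma \ref{lemma: Lambda Difference bound} then transfer this to $\bX^\beta_{opt} \to \bX^\infty_{opt} := \bX(p^\infty_{opt})$ in $\sJ_1^s(H)$ and $\bLambda^\beta_{opt} \to \bLambda^\infty_{opt}$ in $\sL^s(H)$, with $\bLambda^\infty_{opt}$ the solution of \eqref{eqn: dual problem 2} at $p^\infty_{opt}$. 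Passing to the limit in the control operator trace constraint \eqref{eqn: operator trace constraint}, whose right-hand side is $\gamma + \beta^{-1}\norm{\bX^\beta_{opt}\bLambda^\beta_{opt}\bX^\beta_{opt}}_{\sL(H)}$ with a uniformly bounded second term, shows $\trace{\bG_{p^\infty_{opt}}} = \gamma$, so $\p{\bX^\infty_{opt}, p^\infty_{opt}}$ is feasible for the stated constrained problem.

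Third, I would argue optimality and uniqueness. For any feasible pair $\p{\bX, p}$, i.e. one with $\bX = \bX(p)$ and $\trace{\bG_p} = \gamma$, the penalty term in \eqref{eqn: cost functional 2} vanishes identically, so $\mathcal J_\beta(\bX,p) = \trace{\bX\bW}$; combining this with $\mathcal J_\beta(\bX^\beta_{opt}, p^\beta_{opt}) \le \mathcal J_\beta(\bX,p)$ and letting $\beta \to +\infty$ (using Remark \ref{remark: limit} together with the continuity just established to pass to the limit on the left-hand side) yields $\trace{\bX^\infty_{opt}\bW} \le \trace{\bX\bW}$ for every feasible $\p{\bX,p}$. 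Hence $\p{\bX^\infty_{opt}, p^\infty_{opt}}$ is a constrained minimizer, which we rename $\p{\bX_{opt}, p_{opt}}$. For uniqueness, any constrained minimizer is a regular point (the Riccati constraint is uniquely solvable for $\bX$ given $p$ by Theorem \ref{theorem: ARE well-posedness} and the scalar trace constraint is nondegenerate by \eqref{eqn: nonzero assumption}), so the Lagrange multiplier rule applies and the minimizer must satisfy the limiting first-order optimality system; its parameter component is thus a fixed point of $f_\infty$, and $f_\infty$ has exactly one.

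The main obstacle I anticipate is the quantitative bookkeeping needed to let the two limits commute cleanly: one must show that the infimum $\mu := \inf_{p\in\sP}\norm{\bX(p)\bLambda(p)\bX(p)}_{\sL(H)}$ appearing in the denominator of $f$ stays bounded away from $0$ uniformly as $\beta \to +\infty$, so that $f_\infty$ is genuinely well defined and the contraction estimate does not degrade; the lower bound sketched in \S\ref{sec: problem 2} rests on $\bQ \neq \mathbf 0$ and $\bW \neq \mathbf 0$ via \eqref{eqn: Bochner ARE} and \eqref{eqn: adjoint representation} and must be checked to be $\beta$-independent. A secondary subtlety is that the second-order sufficiency argument cannot be invoked at $\beta = +\infty$, since the Hessian $\bd^2\mathcal L_{opt}$ becomes unbounded in that limit; uniqueness of the limiting minimizer therefore has to be extracted purely from the contraction property of $f_\infty$ rather than from positivity of the second variation, which is precisely why the regular-point verification in the previous paragraph is essential.
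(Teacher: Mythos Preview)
Your proposal is correct and follows essentially the same route as the paper: the paper's argument for the corollary is the discussion paragraph immediately preceding it, which asserts that taking $\beta \to +\infty$ preserves the contractive property of \eqref{eqn: optimality condition 2}, enforces $\trace{\bG_p}=\gamma$ via \eqref{eqn: operator trace constraint}, and hence yields a well-defined limit $\p{\bX^\infty_{opt}, p^\infty_{opt}}$ that, by Remark~\ref{remark: limit}, minimizes $\trace{\bX\bW}$ under the two constraints. Your plan fleshes this sketch out considerably --- the perturbed-contraction estimate for $p^\beta_{opt}\to p^\infty_{opt}$, the explicit appeal to Lemmas~\ref{lemma: continuity wrt G} and~\ref{lemma: Lambda Difference bound}, the regular-point check for uniqueness, and the two obstacles you flag (the $\beta$-independence of $\mu$ and the unavailability of second-order sufficiency at $\beta=+\infty$) --- are all details the paper leaves implicit or simply asserts.
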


\section{Discussion} \label{sec: discussion}
% We have analyzed two penalized constrained trace minimization problems that have practical implications in optimal control device design and placement. The Lagrange multiplier formalism was used to reformulate the constrained optimization problem as an unconstrained optimization problem. A fixed-point argument was then employed to determine the necessary conditions conditions under which the first-order optimality systems have unique solutions. The second order sufficient optimality conditions were then applied to demonstrate that the constrained optimizers are indeed minimizers.

% The key factor that enables all of the analysis discussed above is the demonstration that a sufficiently strong form of the operator-valued Riccati equation is well-posed. This result is important in its own right because it enables the use of trace-class operator analysis without appealing to the equivalent Bochner integral equation. Additionally, it demonstrates that $\bX$ is a smoothing operator that maps $\mathcal D(\bA)'$ into $\mathcal D(\bA)$. A future work will attempt to leverage this observation to derive convergence rates for approximate solutions to the operator-valued Riccati equation for when $\bG$ is an unbounded symmetric operator, e.g. in infinite-dimensional boundary and point control and sensing systems.

In this work, we have determined the well-posedness of the strong form of the operator-valued Riccati equation (the primal problem). Upon this foundation, we are able to determine the well-posedness of the dual problem and prove that the solutions to both the primal and dual problems are Lipschitz continuous with respect to the parameter variable that describes the operator associated with the control device. From there, we are able to determine that both constrained weighted trace minimization problems presented in this work have unique constrained minimizers under certain necessary conditions through a fixed-point argument. In the penalization parameter can be chosen sufficiently in the penalized parameter optimization problem to force uniqueness, whereas this property does not extend to the problem of penalization for approximate constraint enforcement. Under the conditions in which the second optimization problem does have a unique minimizer, we determine that in the infinite limit of the penalization parameter that the uniqueness of the minimizer is preserved. This then implies that exact constraint enforcement also leads to a unique minimizer provided that the necessary conditions prescribed in Theorem \ref{theorem: well-posedness optimization 2} are satisfied. 

The results of this work are mostly theoretical. However, it can be utilized to design efficient sensor and actuator design and placement algorithms. The uniqueness of the minimizer enables the use of gradient-based optimization algorithms, as opposed to more expensive global optimization algorithms, since uniqueness is known a priori, The results regarding the strong form of the operator-valued Riccati equation indicates that its solution is compact in the sense that it maps $\mathcal D(\bA^*)'$ into $\mathcal D(\bA^*)$. We plan on utilizing this observation in an attempt to derive optimal convergence rates for the numerical approximation for operator-valued Riccati equations that arise from unbounded sensing and actuation problems in a future work. 

\bibliographystyle{plain}
\bibliography{bibliography}
\end{document}